\date{}
\newtheorem{theorem}{Theorem}[section]
\newtheorem{lemma}[theorem]{Lemma}
\newtheorem{remark}[theorem]{Remark}
\newtheorem{proposition}[theorem]{Proposition}
\newtheorem{corollary}[theorem]{Corollary}
\newcommand{\parder}[3][Default]{
	\frac{\partial \ifthenelse{\equal{#1}{Default}}{}{^{#1}}#2}{
              \partial #3 \ifthenelse{\equal{#1}{Default}}{}{^{#1}}}}
\numberwithin{equation}{section}
\newcommand{\nolisttopbreak}{\vspace{\topsep}\nobreak\@afterheading}
\newcommand{\jac}{{\mathcal{J}}}
\newcommand{\hess}{{\mathcal{H}}}
\newcommand{\Mat}{\operatorname{Mat}}
\newcommand{\GL}{\operatorname{GL}}
\newcommand{\rk}{\operatorname{rk}}
\newcommand{\tr}{\operatorname{tr}}
\newcommand{\trdeg}{\operatorname{trdeg}}
\newcommand{\chr}{\operatorname{char}}
\newcommand{\diag}{\operatorname{diag}}
\newcommand{\tp}{^{\rm t}}
\begin{document}

\title{Quadratic homogeneous polynomial maps $H$ and Keller maps $x+H$ with
$\rk JH=3$}

\author{Michiel de Bondt\footnote{M.deBondt@math.ru.nl}}
\affil{\small Institute for Mathematics, Astrophysics and Particle Physics,
Radboud University Nijmegen, The Netherlands}
\author{Xiaosong Sun\footnote{Corresponding author, E-mail: sunxs@jlu.edu.cn}}
\affil{\small School of Mathematics, Jilin University, Changchun 130012, China}

\maketitle

\begin{abstract}
We classify all quadratic homogeneous polynomial maps $H$ and
Keller maps of the form $x + H$, for which $\rk \jac H = 3$, over a field $K$
of arbitrary characteristic. In particular, we show that such a Keller map (up to a square part if $\chr K=2$) is a tame automorphism.
\end{abstract}

\section{Introduction}

Throughout the paper, $K$ is a field and $K[x]:=K[x_1,x_2,\ldots,x_n]$ stands for the polynomial ring in $n$ variables.
For a polynomial map $F=(F_1,F_2,\ldots,F_m)\in K[x]^m$, we denote by
$\jac F:=(\frac{\partial F_i}{\partial x_j})_{m\times n}$ the Jacobian matrix of $F$ and $\deg F:=\max_i \deg F_i$ the degree of $F$.
Write $F\circ G$ or $FG$ for the composition of two polynomial maps. By the chain rule, $\jac (F\circ G)=(\jac F)|_{x=G}\cdot \jac G$.
 A polynomial map $H\in K[x]^m$ is called homogeneous of degree $d$ if each $H_i$ is zero or homogeneous of degree $d$.

A polynomial map $F\in K[x]^n$ is called a Keller map if $\det \jac F\in K^*$. The Jacobian conjecture asserts that,  when $\chr K=0$, a Keller map is invertible; see \cite{essen2000} or \cite{bass1982}.
It is still open for any dimension $n\geq 2$.

Bass et al.\@ \cite{bass1982} showed that it suffices to consider the Jacobian conjecture for all cubic homogeneous polynomial maps.
Wang \cite{wang} showed that, when $\chr K=0$, any quadratic Keller map is invertible, however little is known for the structure of them.

A polynomial automorphism of the form $E_{i,a}:=(x_1,\ldots,x_{i-1},x_i+a,x_{i+1},\ldots,\\x_n)$ is called elementary,
where $a\in K[x]$ contains no $x_i$. A polynomial
automorphism is called tame if it is a finite composition of
elementary ones and affine ones (i.e. those of degree 1).

The Tame Generators Problem asks if every
polynomial automorphism is tame. It has an affirmative
answer in dimension 2 for arbitrary characteristic (see \cite{jung, kulk}) and
a negative answer in dimension $3$ for the case of $\chr K=0$ (see \cite{shestakov2}), and is still
open for any dimension $n\geq 4$.

Rusek \cite{rusek} conjectured that every quadratic polynomial
automorphism is tame, see also \cite[Section 5.2]{essen2000}. It is the quadratic case of the Tame Generators Problem.

Meisters and Olech \cite{meisters1991} showed that, when $\chr K=0$, Rusek's conjecture has an affirmative answer in dimension $n\leq 4$. De Bondt \cite{bondt09} and Sun \cite{sun} independently showed that, when $\chr K=0$, Rusek's conjecture has an affirmative for $n=5$. And Rusek's conjecture is
still open for any dimension $n\geq 6$.

Sun \cite{sun10} showed that, when $\chr K=0$, any quadratic homogeneous quasi-translation is tame in dimension $n\leq 9$. Recently,
de Bondt \cite{bondt17} classified all quadratic polynomial maps $H$ with $\rk \jac H=2$ for any characteristic, and showed that if $\jac H$ is nilpotent then $\jac H$ is similar over $K$ to a triangular one.

In this paper, we investigate quadratic homogeneous polynomial maps $H$ with $\rk \jac H=r\geq 3$ for any characteristic. In Section 2, we obtain some general results for any $r$ (Theorem \ref{rkr}). And in subsequent sections, we focus on the case of $r=3$. In Section 3, we classify all quadratic homogeneous polynomial maps $H$ with $\rk \jac H=3$ (Theorem \ref{rk3}), and in Section 4, we classify the corresponding Keller maps $x+H$, and show that such a Keller map (up to a square part if $\chr K=2$) is a tame automorphism (Theorem \ref{rk3np}).

\section{Quadratic homogeneous polynomial maps $H$ with $\rk \jac  H=r$}

In this section, we are devoted to obtaining some general results on the structure of quadratic homogeneous polynomial maps $H$ with
$\rk \jac H=r$ for any characteristic. The main result is the following Theorem \ref{rkr}.

\begin{theorem} \label{rkr}
Let $H \in K[x]^m$ be a quadratic homogeneous polynomial map, and
$r := \rk \jac H$. Then there are $S \in \GL_m(K)$ and $T \in \GL_n(K)$, such that
for $\tilde{H} := S H(Tx)$, only the first $\frac{1}{2} r^2 + \frac12 r$ rows
of $\jac \tilde{H}$ may be nonzero, and one of the following statements holds:
\begin{enumerate}[(1)]

\item Only the first $\frac{1}{2} r^2 - \frac{1}{2} r + 1$ rows of $\jac \tilde{H}$ may be nonzero;

\item $\chr K \ne 2$ and only the first $r$ columns of $\jac \tilde{H}$ are nonzero;

\item $\chr K = 2$ and only the first $r+1$ columns of $\jac \tilde{H}$ are nonzero.

\end{enumerate}
Conversely, $\rk \jac \tilde{H} \le r$ if either $\tilde{H}$ is as in
(2) or (3), or $1 \le r \le 2$ and $\tilde{H}$ is as in
(1).
\end{theorem}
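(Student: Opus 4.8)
I will handle the two implications separately, starting with the easier one.

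\emph{Converse.} If $\tilde H$ is as in (2), then $\jac\tilde H$ has at most $r$ nonzero columns, so $\rk\jac\tilde H\le r$, with nothing needed about $\chr K$. If $1\le r\le 2$ and $\tilde H$ is as in (1), then $\frac12 r^2-\frac12 r+1=r$, so $\jac\tilde H$ has at most $r$ nonzero rows and again $\rk\jac\tilde H\le r$. The substantive case is (3): here $\chr K=2$ and, up to a square part, $\tilde H$ is a quadratic form in $x_1,\dots,x_{r+1}$. Writing each $\tilde H_i$ as a quadratic form gives $\jac\tilde H_i=x\tp B_i$ where, since $\chr K=2$, $B_i$ may be taken symmetric with zero diagonal, whence $x\tp B_i x=0$; the column hypothesis forces $B_i$ to be supported in the top-left $(r+1)\times(r+1)$ block. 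Thus each column $B_i x$ of $(\jac\tilde H)\tp$ lies in $\langle e_1,\dots,e_{r+1}\rangle_{K(x)}$ and is orthogonal to $x':=(x_1,\dots,x_{r+1})\tp$, hence in an $r$-dimensional $K(x)$-subspace (cut out of $K(x)^{r+1}$ by the single nonzero linear form $w\mapsto\sum_{j\le r+1}x_j w_j$). Therefore $\rk\jac\tilde H=\rk(\jac\tilde H)\tp\le r$.

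\emph{Forward implication.} After deleting zero components and, if the common kernel $\bigcap_i\ker\hess H_i$ of the Hessians is nonzero, changing coordinates to drop a variable (modulo a square part when $\chr K=2$), we may assume this kernel is $0$. For $\chr K\ne 2$ write $H_i=\frac12 x\tp A_i x$ with $A_i$ symmetric; for $\chr K=2$ write $H_i$ as a quadratic form plus a square part, let $B_i$ be its alternating symmetrization, and carry the square part along. In either case $\rk\jac H$ equals $\dim_{K(x)}C$, where $C:=\langle A_1 x,\dots,A_m x\rangle_{K(x)}$ (resp.\ $\langle B_1 x,\dots,B_m x\rangle_{K(x)}$) is the column space of $(\jac H)\tp$; thus $\dim_{K(x)}C=r$, and a suitable $S\in\GL_m(K)$ makes $A_1 x,\dots,A_r x$ (resp.\ $B_1 x,\dots,B_r x$) a $K(x)$-basis of $C$. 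I then split on whether $C$ is defined over $K$, equivalently whether the Gauss-type map $a\mapsto\langle A_1 a,\dots,A_m a\rangle\in\operatorname{Gr}(r,n)$ is constant. If it is, a $T\in\GL_n(K)$ carries that fixed subspace to $\langle e_1,\dots,e_r\rangle$; since each $A_i$ is symmetric with $A_i x$ supported in the first $r$ coordinates, each $A_i$ lies in the top-left $r\times r$ block, so $\tilde H\in K[x_1,\dots,x_r]^m$, only the first $r$ columns of $\jac\tilde H$ are nonzero (case (2)), and a further $S$ kills all but $\dim K[x_1,\dots,x_r]_2=\frac12 r^2+\frac12 r$ rows. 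When $\chr K=2$, $C\subseteq x^{\perp}$ is never defined over $K$; instead one produces a $T$ placing $\tilde H$ (up to a square part) in $r+1$ variables, which is case (3), the row count being bounded by $\dim\big(K[x_1,\dots,x_{r+1}]_2/(\text{square parts})\big)=\frac12 r^2+\frac12 r$. In the remaining situation---$C$ not defined over $K$, either with $\chr K\ne 2$ or in the analogous non-degenerate configuration with $\chr K=2$---a rigidity argument in the spirit of Gordan--Noether, using that Euler's identity expresses each $H_i$ with $i>r$ as an explicit $K(x)$-combination of $H_1,\dots,H_r$ and that the partials of $H$ span a small linear space, forces the normal form to have at most $\frac12 r^2-\frac12 r+1$ nonzero rows, which is case (1).

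The main obstacle is exactly this last step: upgrading the a priori bound $\frac12 r^2+\frac12 r$ to the sharp $\frac12 r^2-\frac12 r+1$ when the Gauss map is not constant, and checking that the trichotomy is exhaustive. The delicate point throughout is the interaction with characteristic $2$---alternating in place of symmetric matrices, the ever-present constraint $C\subseteq x^{\perp}$, and the square part that $S$ and $T$ must be allowed to transport---which is presumably what the remainder of Section 2 is built to handle.
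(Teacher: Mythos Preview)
Your converse is correct and essentially the same as the paper's Lemma~\ref{23}: for (3) you use $x\tp B_i x=0$ in place of the paper's Euler relation $\jac\tilde H\cdot x=2\tilde H=0$, but the content is identical.

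The forward direction, however, is not a proof. You correctly isolate the dichotomy ``column space of $(\jac H)\tp$ defined over $K$ or not'' and dispose of the first branch, but the heart of the theorem is the second branch, and there you write only ``a rigidity argument in the spirit of Gordan--Noether \ldots\ forces the normal form to have at most $\frac12 r^2-\frac12 r+1$ nonzero rows''. No such argument is given, and you acknowledge in your last paragraph that this is the main obstacle. A Gordan--Noether style argument controls the variety swept by the gradients, but it does not by itself produce the $K$-linear row reduction with the sharp count $\frac12 r^2-\frac12 r+1$; something further is needed, and what that something is \emph{is} the proof.

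Your characteristic-$2$ discussion is also too loose. You assert that when $\chr K=2$ ``one produces a $T$ placing $\tilde H$ (up to a square part) in $r{+}1$ variables'', but you give no criterion for when this succeeds versus when you fall into ``the analogous non-degenerate configuration'' leading to (1). In fact both (1) and (3) genuinely occur in characteristic $2$, and separating them is part of the work.

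The paper's route is quite different from your Gauss-map picture. It first enlarges $K$ if necessary (Lemma~\ref{Fqlem}), then specializes $\jac H$ at a suitable point to obtain, after left/right multiplication by constants, a normal form
\[
\jac\tilde H=\begin{pmatrix}A&B\\ C&D\end{pmatrix},\qquad
A\equiv I_r\pmod{(L_2,\dots,L_n)},\quad D=0,\quad CB=0
\]
(Lemma~\ref{Irlem} and \eqref{D0CB0}). The proof is then an induction on $r$: if the column space of $B$ contains a nonzero constant vector one strips off a row and applies the inductive hypothesis to rank $r-1$; if not, one shows via $CB=0$ and Lemmas~\ref{Ccoldep1}, \ref{BCrkr} that $\rk C\le r-2$, and the inductive hypothesis applied to $C$ (rank $\le r-2$) gives at most $\frac12(r-2)^2+\frac12(r-2)=\frac12 r^2-\frac32 r+1$ nonzero rows below $A$, hence $\frac12 r^2-\frac12 r+1$ rows in total. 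Cases (2) and (3) arise exactly when $B=0$, respectively when $\chr K=2$ and $\rk B\le 1$ with rows of $B$ not pairwise $K$-dependent. This block/induction mechanism is what your sketch is missing.
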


To prove Theorem \ref{rkr}, we start with some lemmas.

\begin{lemma} \label{Fqlem}
Let $L$ be an extension field of $K$. If Theorem \ref{rkr}
holds for $L$, then it holds for $K$.
\end{lemma}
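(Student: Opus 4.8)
The statement asserts that the truth of Theorem \ref{rkr} descends from an extension field $L$ to the base field $K$. The plan is to take a quadratic homogeneous $H \in K[x]^m$, view it as a map over $L$, apply the theorem over $L$ to obtain $S \in \GL_m(L)$ and $T \in \GL_n(L)$ with the stated structural properties, and then argue that suitable matrices can be found over $K$ as well. The first point to nail down is that $\rk \jac H$ is unchanged under the field extension: the rank of a matrix with entries in $K[x]$ equals the size of the largest nonvanishing minor, and a minor (a polynomial in $K[x]$) is nonzero over $K$ if and only if it is nonzero over $L$, so $r := \rk_K \jac H = \rk_L \jac H$. Hence the integer $r$ in the statement is the same whether computed over $K$ or $L$.

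The heart of the argument is a descent-of-existence statement: the set of pairs $(S,T) \in \GL_m \times \GL_n$ for which $\tilde H := S\,H(Tx)$ has "only the first $k$ rows nonzero'' (for a given $k$), or "only the first $\ell$ columns nonzero'', is defined by polynomial equations in the entries of $S$ and $T$ — namely, the vanishing of certain coefficients of the polynomial entries of $\jac \tilde H = (S)\,(\jac H)|_{x=Tx}\,(T)$, which are themselves polynomials in the entries of $S$ and $T$. Combined with the Zariski-open conditions $\det S \ne 0$, $\det T \ne 0$, each case of the theorem asserts that a certain constructible set (a finite union of locally closed sets, one per case, intersected with the open locus where $S,T$ are invertible) is nonempty over $L$. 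Since this set is defined over $K$, I would invoke the standard fact that a $K$-constructible set with an $L$-point has a point over $\bar K$, and then — because $K$ may be finite or otherwise small — pass to a point over $K$ itself using that these loci are in fact defined by linear-algebraic normal-form conditions; alternatively, and more elementarily, one rephrases everything as a system of polynomial equations/inequations over $K$ that has a solution over $L$, hence over $K$ by the Lefschetz-style principle (a consistent finite system over $K$ remains consistent over $K$, since inconsistency would be witnessed by a Nullstellensatz certificate with $K$-coefficients). The converse direction of Theorem \ref{rkr} (that $\rk \jac \tilde H \le r$ in cases (2),(3), or in case (1) when $r \le 2$) is purely about ranks of matrices over $K[x]$ and is unaffected by the field, so it transfers back immediately.

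The main obstacle is the subtlety when $K$ is a small field — the clean statement "constructible and nonempty over $L$ $\Rightarrow$ nonempty over $K$'' is false in general (e.g. $\GL_1(\mathbb{F}_2)$ misses points), so one cannot simply cite abstract constructibility. I expect the fix is to exploit that the normal forms in the theorem are obtained by a constructive sequence of row and column operations together with a case distinction on $\chr K$, so the required $S$ and $T$ can be produced by an explicit algorithm over $K$ whenever they exist over any extension; in other words, the proof should show that the combinatorial "type'' of the normal form (which case (1)/(2)/(3) occurs, and the exact positions of the nonzero rows/columns) is detected by $\bar K$-rational — indeed $K$-rational — data such as ranks of coefficient matrices of $H$, which are extension-invariant. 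Concretely, I would reduce to showing: if over $L$ one lands in case (1) with $\le \frac12 r^2 - \frac12 r + 1$ nonzero rows, then the relevant coefficient matrix of $H$ (the one whose rank governs how many rows survive) already has the right rank over $K$; and similarly the characteristic-$2$ dichotomy between (2) and (3) is intrinsic to $K$. Once the correct case and the correct target positions are known over $K$, the transforming matrices exist over $K$ by elementary linear algebra. This makes the lemma essentially a bookkeeping statement: all the discrete invariants that select the normal form are field-independent, so the normal form itself can be realized over the ground field.
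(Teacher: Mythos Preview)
Your proposal takes a much more complicated route than needed, and the path you sketch does not quite close. The constructibility/Lefschetz argument, as you yourself note, fails over small fields. Your proposed ``fix'' --- running the constructive row/column algorithm of Theorem~\ref{rkr} directly over $K$ --- is circular: the whole point of this lemma is to \emph{avoid} having to carry out that argument over $K$, because the proof of Theorem~\ref{rkr} needs $|K| \ge r$ (via Lemma~\ref{Irlem}). So invoking ``the explicit algorithm over $K$'' presupposes what the lemma is meant to enable.

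The paper's proof is a two-line induction on $m$ (and analogously on $n$ for the column statement). The key observation is the one you gesture at only in your final sentence: the condition ``there exist $S,T$ with only the first $k$ rows of $\jac(SH(Tx))$ nonzero'' is equivalent to ``the rows of $\jac H$ span a $K$-vector space of dimension $\le k$'' (since $T$ and the substitution $x\mapsto Tx$ do not affect $K$-linear relations among rows). Thus if $m > \tfrac12 r^2 + \tfrac12 r$ and the theorem holds over $L$, the rows of $\jac H$ are $L$-linearly dependent; but they lie in the $K$-vector space $K[x]^n$, so $L$-dependence forces $K$-dependence. Kill one row over $K$ and induct. No constructible sets, no Nullstellensatz certificates, no algorithm. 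Your worry about the ``characteristic-$2$ dichotomy'' is also a non-issue, since $\chr K = \chr L$. The last claim of Theorem~\ref{rkr} is a statement about ranks of matrices over $K[x]$, which is field-independent for the trivial reason that nonvanishing of a minor in $K[x]$ is the same over $K$ and $L$.
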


\begin{proof}
We only prove this lemma for the first claim of Theorem \ref{rkr},
because the second claim can be treated in a similar manner, and the last
claim does not depend on the  base field.

Suppose that $H\in K[x]^m$ satisfies the first claim of Theorem \ref{rkr} for $L$, i.e., there are $S \in \GL_m(L)$ and $T \in \GL_n(L)$ such that
for $\tilde{H} := S H(Tx)$, only the first $\frac12 r^2 + \frac12 r$ rows
of $\jac \tilde{H}$ may be nonzero. The first claim of Theorem \ref{rkr} holds for $K$ obviously if $m \leq \frac{1}{2} r^2 + \frac12 r$. So assume that $m > \frac12 r^2 + \frac12 r$. Then the rows of $\jac H$ are
dependent over $L$. Since $L$ is a vector space over $K$,
the rows of $\jac H$ are dependent over $K$. So we
may assume that the last row of $\jac H$ is zero. By induction on $m$,
$(H_1,H_2,\ldots,H_{m-1})$ satisfies the first claim of Theorem \ref{rkr} for $K$ and thus $H$ satisfies the first claim of Theorem \ref{rkr} for $K$.
\end{proof}

\begin{lemma} \label{23}
If a quadratic homogeneous polynomial $\tilde{H}$ is of the form in (2) or (3) of
Theorem \ref{rkr}, then $\rk \jac \tilde{H}\leq r$ and there exists an $S\in \GL_m(K)$ such that only the first $\frac12 r^2 + \frac12 r$ rows
of $\jac (S\tilde{H})$ may be nonzero.
\end{lemma}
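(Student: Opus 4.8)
The goal is twofold: bound the rank of $\jac\tilde H$ by $r$, and then show that a linear change on the target (an $S\in\GL_m(K)$) collapses the nonzero rows into the first $\tfrac12 r^2+\tfrac12 r$ of them. The rank bound is the easy half. If $\tilde H$ is as in (2), only the first $r$ columns of $\jac\tilde H$ are nonzero, so $\rk\jac\tilde H\le r$ trivially. If $\tilde H$ is as in (3), only the first $r+1$ columns are nonzero, so $\rk\jac\tilde H\le r+1$; to get the sharper bound $\le r$ in characteristic $2$, I would use that each $\tilde H_i$ is a quadratic form in $x_1,\dots,x_{r+1}$, and in characteristic $2$ the Jacobian of a quadratic form $q$ is $(\nabla q)$ where the associated bilinear form $B_q(u,v)=q(u+v)-q(u)-q(v)$ is alternating, hence $\jac\tilde H$ is, after restricting to the first $r+1$ columns, of the shape $x\mapsto xA$ for an alternating $(r+1)\times(r+1)$ matrix $A$ built from the $\tilde H_i$'s — more precisely the rows of $\jac\tilde H$ lie in the image of $x\mapsto (x_1,\dots,x_{r+1})A_i$ with $A_i$ alternating; since $x$ is a common left factor and alternating matrices in odd size $r+1$ are singular, the rank drops to $r$. (One should be careful: the cleanest route is to observe directly that in case (3) the construction producing $\tilde H$ came from Theorem \ref{rkr}, so $\rk\jac\tilde H=r$ is inherited; but since Lemma \ref{23} is meant to be used inside the proof of that theorem, I would argue it intrinsically as just sketched.)

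For the second, substantive half, I would proceed as follows. Write $\tilde H=(\tilde H_1,\dots,\tilde H_m)$ where each $\tilde H_i$ is a quadratic form in $y:=(x_1,\dots,x_s)$ with $s=r$ in case (2) and $s=r+1$ in case (3). The key point is that the space of all quadratic forms in $s$ variables has dimension $\binom{s+1}{2}=\tfrac12 s^2+\tfrac12 s$ in characteristic $\ne 2$, which for $s=r$ is exactly $\tfrac12 r^2+\tfrac12 r$; so in case (2) the $\tilde H_i$ span a subspace of dimension at most $\tfrac12 r^2+\tfrac12 r$, and choosing $S\in\GL_m(K)$ to be a basis change that puts a spanning subset of the $\tilde H_i$ first makes all but the first $\tfrac12 r^2+\tfrac12 r$ components of $S\tilde H$ vanish, so all but the first $\tfrac12 r^2+\tfrac12 r$ rows of $\jac(S\tilde H)$ are zero. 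In case (3) ($\chr K=2$, $s=r+1$) I must be more careful, because $\binom{r+2}{2}>\tfrac12 r^2+\tfrac12 r$. Here I would use that in characteristic $2$ the relevant data attached to a quadratic form $q=\sum_{i\le j}c_{ij}y_iy_j$ splitting into its ``diagonal part'' $\sum_i c_{ii}y_i^2$ and its ``Jacobian part'': the Jacobian $\jac q$ only sees the off-diagonal coefficients $c_{ij}$ with $i<j$, since $\partial(y_i^2)/\partial y_i=0$. The off-diagonal part of a quadratic form in $s=r+1$ variables lives in a space of dimension $\binom{s}{2}=\binom{r+1}{2}=\tfrac12 r^2+\tfrac12 r$. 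So the map $\tilde H_i\mapsto$ (off-diagonal coefficient vector of $\tilde H_i$) has a target of dimension exactly $\tfrac12 r^2+\tfrac12 r$; pick $S\in\GL_m(K)$ so that the first $\tfrac12 r^2+\tfrac12 r$ rows span the image and the remaining components of $S\tilde H$ have zero off-diagonal part, i.e.\ are squares $\sum_i c_{ii}x_i^2$, whose Jacobian vanishes identically in characteristic $2$. Hence again only the first $\tfrac12 r^2+\tfrac12 r$ rows of $\jac(S\tilde H)$ can be nonzero.

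The main obstacle is exactly this characteristic-$2$ bookkeeping: one must separate each quadratic form into its alternating/off-diagonal part (which the Jacobian detects) and its square/diagonal part (which it does not), verify that $S$ can be chosen over $K$ and not merely over an extension — this is immediate since everything is linear algebra over $K$ — and confirm that the dimension count $\binom{r+1}{2}=\tfrac12 r^2+\tfrac12 r$ lines up, so that the bound on the number of surviving rows is exactly the one claimed. A secondary care point is that $S$ acts only on the target and thus does not disturb the column structure, so the conclusion ``only the first $\tfrac12 r^2+\tfrac12 r$ rows of $\jac(S\tilde H)$ may be nonzero'' is compatible with $\tilde H$ still being of the form in (2) or (3); I would note this explicitly so the lemma can be invoked cleanly later.
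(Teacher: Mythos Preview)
Your approach is essentially the one in the paper: count the relevant quadratic monomials in cases (2) and (3) and choose $S$ to move a spanning set into the first $\tfrac12 r^2+\tfrac12 r$ slots. For the existence of $S$ your argument matches the paper's exactly (the paper phrases case (3) as ``square-free quadratic terms in $x_1,\dots,x_{r+1}$'', which is your ``off-diagonal part'').

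For the rank bound in case (3), however, your sketch is muddled. The remark that ``alternating matrices in odd size $r+1$ are singular'' is irrelevant --- nothing tells you $r$ is even --- and it is not what actually makes the rank drop. The clean argument, and the one the paper gives, is the Euler identity: since $\tilde H$ is homogeneous of degree $2$, one has $(\jac\tilde H)\cdot x = 2\tilde H$, which vanishes in characteristic $2$; hence the $r+1$ nonzero columns of $\jac\tilde H$ are linearly dependent over $K(x)$ and $\rk\jac\tilde H\le r$. Your alternating-form setup can be made to say exactly this (row $i$ equals $(x_1,\dots,x_{r+1})A_i$ with $A_i$ alternating, whence $(\jac\tilde H)\cdot(x_1,\dots,x_{r+1})^{\rm t}=0$), but you should drop the odd-size claim and state the kernel vector $x$ explicitly.
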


\begin{proof} If $\tilde{H}$ is as in (2) of Theorem \ref{rkr}, then it is obvious that $\rk \jac \tilde{H} \le r$. Notice that in this case $\tilde{H}$ contains only terms in $x_1,x_2,\ldots,x_r$.  Since
the number of quadratic terms in $x_1, x_2, \ldots, x_r$ is $\binom{r+1}{2}=\frac12 r^2 + \frac12 r$,  the conclusion follows.

If $\tilde{H}$ is as in
(3) of Theorem \ref{rkr}, then $\rk \jac \tilde{H} \le r$ as well since $\jac \tilde{H} \cdot x = 2\tilde{H}=0$ when $\chr K = 2$.
Notice that in this case all the square-free terms of $\tilde{H}$ are in $x_1,x_2,\ldots,x_r,x_{r+1}$.  Since the number of square-free quadratic terms in $x_1, x_2, \ldots, x_r,x_{r+1}$ is also $\binom{r+1}{2}=\frac{1}{2} r^2 + \frac12 r$, the conclusion follows.
\end{proof}

\begin{lemma} \label{Irlem}
Let $M$ be a nonzero matrix whose entries are linear forms in $K[x]$. Suppose
that $r := \rk M$ does not exceed the cardinality of $K$.
Then there are invertible matrices $S$ and $T$ over $K$, such that for $\tilde{M} := S M T$,
$$
\tilde{M} = \tilde{M}^{(1)} L_1 + \tilde{M}^{(2)} L_2 + \cdots + \tilde{M}^{(n)} L_n
$$
where $\tilde{M}^{(i)}$ is a matrix with coefficients in $K$ for each $i$,
$L_1, L_2, \ldots, L_n$ are independent linear forms, and
$
\tilde{M}^{(1)} = \left( \begin{array}{cc}
I_r & 0 \\ 0 & 0 \\
\end{array} \right).
$
\end{lemma}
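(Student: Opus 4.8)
The plan is to work directly with the linear‑form entries of $M$. Write $M = \sum_{i=1}^{n} M^{(i)} x_i$, where each $M^{(i)}$ is a matrix over $K$. After replacing the coordinates by an invertible linear change (i.e.\ multiplying the tuple $(x_1,\ldots,x_n)$ by a matrix in $\GL_n(K)$, which only reshuffles and recombines the $M^{(i)}$), we may assume the linear forms $L_i := x_i$ that actually occur are independent; equivalently we may assume $M^{(1)},\ldots,M^{(n)}$ span the $K$‑linear space they generate in $\Mat_{?}(K)$ in such a way that any dependency has already been eliminated. So the only real content is: choose $S$ and $T$ so that $S M^{(1)} T$ (for a suitable relabelling of which coefficient matrix we call the first) has the block form $\left(\begin{smallmatrix} I_r & 0\\ 0 & 0\end{smallmatrix}\right)$, while preserving independence of the surviving linear forms.

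First I would pick a specialization of the variables that realizes the generic rank: since $\rk M = r$ and the number of variables‑worth of freedom is bounded by $|K|$ (this is exactly where the hypothesis $r \le |K|$ is used, to guarantee that $K$ is large enough that the $r\times r$ minors, which are forms of degree $r$, do not all vanish identically on $K^n$), there is a point $c = (c_1,\ldots,c_n) \in K^n$ with $\rk M(c) = r$. Then I would apply a linear change of coordinates on $x$ sending $c$ to the first standard basis vector $e_1$; after this change, $M^{(1)} = M|_{x = e_1} = M(c)$ has rank exactly $r$. Now choose $S \in \GL(K)$ and $T \in \GL(K)$ with $S M^{(1)} T = \left(\begin{smallmatrix} I_r & 0\\ 0 & 0\end{smallmatrix}\right)$; these are just the row/column operations that put a rank‑$r$ matrix into its canonical form. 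Replacing $M$ by $\tilde M := S M T$ and expanding again as $\tilde M = \sum_i \tilde M^{(i)} x_i$ with $\tilde M^{(i)} = S M^{(i)} T$ gives $\tilde M^{(1)}$ of the required shape. Finally, discard any $x_i$ for which $\tilde M^{(i)} = 0$, and if the remaining coefficient matrices are linearly dependent over $K$, say $\sum_i \lambda_i \tilde M^{(i)} = 0$ with $\lambda_1 \ne 0$ (we can arrange $\lambda_1\ne 0$ or else relabel), perform one more invertible change of variables replacing $x_1$ by $\sum_i \lambda_i x_i$; this kills one more coefficient matrix without changing $\tilde M$ as a matrix of linear forms, and iterating leaves us with $\tilde M^{(i)}$ linearly independent and hence the $L_i$ that occur genuinely independent. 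Relabel so that the nonzero $\tilde M^{(i)}$ come first.

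The one point that needs care — and the main obstacle — is the interaction between the last normalization step and the already‑achieved form of $\tilde M^{(1)}$: when we eliminate a dependency we want to do it using a coefficient matrix other than $\tilde M^{(1)}$ as the "pivot", so as not to destroy the block $\left(\begin{smallmatrix} I_r & 0\\ 0 & 0\end{smallmatrix}\right)$. This is fine because if $\tilde M^{(1)}$ appeared with nonzero coefficient in every dependency, then in fact $\tilde M^{(1)}$ would lie in the span of the others and we could instead use a relation expressing $\tilde M^{(1)}$ in terms of them — but then the roles can be reversed: substitute for one of the other $x_j$'s. Concretely, pick any dependency $\sum_i \lambda_i \tilde M^{(i)} = 0$; some $\lambda_j$ with $j \ne 1$ is nonzero (if the only dependencies were $\lambda_1 \tilde M^{(1)} = 0$ then $\tilde M^{(1)} = 0$, contradicting $r \ge 1$), and we substitute $x_j \mapsto \lambda_j^{-1}(x_j - \sum_{i\ne j}\lambda_i x_i)$, which removes $\tilde M^{(j)}$ and leaves $\tilde M^{(1)}$ untouched. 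Repeating finitely often (the number of surviving coefficient matrices strictly decreases) terminates with the desired independence, and $\tilde M^{(1)}$ still equals $\left(\begin{smallmatrix} I_r & 0\\ 0 & 0\end{smallmatrix}\right)$. This completes the argument.
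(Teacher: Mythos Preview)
Your core argument is correct and follows essentially the same approach as the paper: find a point $c \in K^n$ at which $M$ specializes to a matrix of rank $r$ (using the hypothesis $r \le |K|$ to guarantee that a nonzero degree-$r$ minor does not vanish identically on $K^n$), choose a basis of linear forms so that the coefficient matrix attached to the first one equals $M(c)$, and then apply $S,T$ over $K$ to bring $M(c)$ to the block form $\left(\begin{smallmatrix} I_r & 0 \\ 0 & 0 \end{smallmatrix}\right)$. The paper does this by picking $L_1,\dots,L_n$ with $L_i(c)=0$ for $i\ge 2$ and $L_1(c)=1$, which is exactly your ``linear change of coordinates sending $c$ to $e_1$'' phrased dually.

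Your final two paragraphs, however, are based on a misreading of the statement and are unnecessary. The lemma only asks that $L_1,\dots,L_n$ be independent \emph{linear forms}; since there are exactly $n$ of them in $K[x_1,\dots,x_n]$, this just means they form a basis of the space of linear forms, which is automatic for the coordinates you have after the change of variables. Nothing is required of the coefficient matrices $\tilde M^{(i)}$ --- some of them may be zero or linearly dependent on the others, and that is fine. So you may stop immediately after obtaining $\tilde M^{(1)} = \left(\begin{smallmatrix} I_r & 0 \\ 0 & 0 \end{smallmatrix}\right)$; the subsequent dependency-elimination, while not incorrect, does no work toward the lemma as stated.
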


\begin{proof}
We may assume without loss of generality that the determinant $f:=\det M_0$ is nonzero, where $M_0$ is the
principal submatrix of size $r \times r$ of $M$. Since $f$ is a homogeneous polynomial of degree $r$, we deduce from \cite[Lemma 5.1 (ii)]{bondt13} that
there exists a $v \in K^n$ such that $f(v) \ne 0$.

Take independent linear forms $L_1, L_2, \ldots, L_n$ such that $L_i(v) = 0$
for all $i \ge 2$. Then $L_1(v) \ne 0$, and we may assume that $L_1(v) = 1$.
Write
$$
M = M^{(1)} L_1 + M^{(2)} L_2 + \cdots + M^{(n)} L_n,
$$
where each $M^{(i)}$ is a matrix over $K$.
Since $M^{(1)}=M(v)$, we have $\rk M^{(1)}=r$ and its leading principal minor of size $r$ is nonzero, and thus we may choose invertible matrices $S$ and $T$ over $K$,
such that
$
S M^{(1)} T = \left( \begin{array}{cc}
I_r & 0 \\ 0 & 0 \\
\end{array} \right).
$
Finally, take $\tilde{M}=SMT$ and $\tilde{M}^{(i)} = S M^{(i)} T$ for each $i$.
\end{proof}

Suppose that $\tilde{M}$ is as in Lemma \ref{Irlem}. Write
\begin{equation} \label{ABCD}
\tilde{M} = \left( \begin{array}{cc} A & B \\ C & D \end{array} \right)
\end{equation}
where $A \in \Mat_r(K[x])$. If we extend $A$ with one row and one
column of $\tilde{M}$, we get an element of $\Mat_{r+1}\big(K[x]\big)$ whose
determinant is zero. If we focus on the coefficients of $L_1^r$
and $L_1^{r-1}$ of this determinant, we see that
\begin{equation} \label{D0CB0}
D = 0 \qquad \mbox{and} \qquad CB = 0.
\end{equation}

\begin{lemma} \label{Ccoldep1}
Let $\tilde{H} \in K[x]^m$, such that $\jac \tilde{H}$ is as $\tilde{M}$ in
Lemma \ref{Irlem} and write $\tilde{M} = \left( \begin{array}{cc} A & B \\ C & D \end{array} \right)$ as in \eqref{ABCD}. Suppose that $\chr K \ne 2$. Then
\begin{enumerate}[(i)]

\item If $C \ne 0$, then there exists a $v \in K^n$ of which the first $r$
coordinates are not all zero, such that
$
(\jac \tilde{H}) \cdot v = \left( \begin{array}{cc} I_r & 0 \\
0 & 0 \end{array} \right) \cdot x.
$

\item The columns of $C$ are dependent over $K$.
\end{enumerate}
\end{lemma}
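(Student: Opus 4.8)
The statement to prove is Lemma~\ref{Ccoldep1}: assuming $\chr K\neq 2$ and $\jac\tilde H$ has the form $\tilde M$ from Lemma~\ref{Irlem}, (i) if $C\neq 0$ there is $v\in K^n$ with its first $r$ coordinates not all zero such that $(\jac\tilde H)\cdot v=\bigl(\begin{smallmatrix}I_r&0\\0&0\end{smallmatrix}\bigr)x$, and (ii) the columns of $C$ are linearly dependent over $K$.

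Let me sketch the proof.

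The plan is to deduce (ii) from (i) and to put all the work into (i). For (ii): if $C=0$ its columns are trivially dependent, so assume $C\neq 0$ and let $v$ be the vector furnished by (i). Reading off the last $m-r$ coordinates of $(\jac\tilde H)\cdot v=\left(\begin{smallmatrix}I_r&0\\0&0\end{smallmatrix}\right)\cdot x$ gives $(C\mid D)\,v=0$, hence $C\bar v=0$ by \eqref{D0CB0}, where $\bar v:=(v_1,\dots,v_r)\tp\neq 0$; this is a nontrivial $K$-linear dependence among the columns of $C$.

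For (i), I would first observe that, $\tilde H$ being quadratic homogeneous, each Hessian $\hess\tilde H_k$ is symmetric, so $(\jac\tilde H)(v)\cdot x=(\jac\tilde H)\cdot v$ for every constant $v\in K^n$; hence the asserted identity is equivalent to $(\jac\tilde H)(v)=\tilde M^{(1)}=\left(\begin{smallmatrix}I_r&0\\0&0\end{smallmatrix}\right)$. Since $\jac\tilde H=\tilde M^{(1)}L_1+\dots+\tilde M^{(n)}L_n$, the matrix $\tilde M^{(1)}$ is precisely the value $(\jac\tilde H)(u)$ at the point $u\in K^n$ determined by $L_1(u)=1$ and $L_i(u)=0$ for $i\geq 2$; so one may take $v:=u$ provided the first $r$ coordinates of $u$ are not all zero. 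The crux — and where I expect the main difficulty — is to show that if those coordinates all vanish then $C=0$, contradicting $C\neq 0$.

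So suppose $u=(0,\dots,0,u_{r+1},\dots,u_n)$ and write $\hat u:=(u_{r+1},\dots,u_n)$. By \eqref{D0CB0} we have $D=0$, i.e.\ $\tilde H_k\in K[x_1,\dots,x_r]$ for $k>r$; writing $H_k^{(r)}$ for the $r\times r$ Hessian of such a $\tilde H_k$, one has $C=0$ iff $H_k^{(r)}=0$ for all $k>r$ (equivalently $\tilde H_k=0$, since $\chr K\neq 2$). Decomposing $\hess\tilde H_k=\left(\begin{smallmatrix}P_k&Q_k\\ Q_k\tp&R_k\end{smallmatrix}\right)$ with $P_k$ of size $r\times r$, the equality $(\jac\tilde H)(u)=\tilde M^{(1)}$ read off in rows $k\leq r$ forces $\hess\tilde H_k\,u=e_k$, which, as $u=(0;\hat u)$, amounts to $Q_k\hat u=e_k$, the $k$-th standard basis vector of $K^r$.

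Next I would substitute the generic point $u+(\bar y;0)$ into $\jac\tilde H$, where $\bar y=(y_1,\dots,y_r)$ are indeterminates and $(\bar y;0)$ has last $n-r$ entries zero. Its first $r$ rows become $\bigl[\,I_r+(\text{homogeneous linear in }\bar y)\mid(\text{homogeneous linear in }\bar y)\,\bigr]$, with left block invertible over $K(\bar y)$ (it is $I_r$ at $\bar y=0$), while for each fixed $k_0>r$ the $k_0$-th row becomes $\bigl[\,\bar y\tp H_{k_0}^{(r)}\mid 0\,\bigr]$ (using $\tilde H_{k_0}\in K[x_1,\dots,x_r]$ and $u=(0;\hat u)$). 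Since $\rk\jac\tilde H=r$, the substituted matrix has rank $\leq r$ over $K(\bar y)$, so its $k_0$-th row is a $K(\bar y)$-combination of the first $r$ rows; clearing the determinant of the left block and extracting the lowest-degree part in $\bar y$ (which is homogeneous of degree $2$) gives, after expansion and symmetrization, an identity
\[
\sum_{k=1}^{r}\bigl[(H_{k_0}^{(r)})_{ka}(Q_k)_{bp}+(H_{k_0}^{(r)})_{kb}(Q_k)_{ap}\bigr]=0\qquad\text{for all }a,b,p .
\]
Contracting this in the index $p$ against $\hat u$ and using $Q_k\hat u=e_k$ collapses the left side to $(H_{k_0}^{(r)})_{ba}+(H_{k_0}^{(r)})_{ab}=2(H_{k_0}^{(r)})_{ab}$, so $H_{k_0}^{(r)}=0$ for every $k_0>r$ (here $\chr K\neq 2$ is used once more), i.e.\ $C=0$ — the desired contradiction. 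The main obstacle is precisely this computation: faithfully bookkeeping and symmetrizing the degree-$2$ part of the row-dependence at $u+(\bar y;0)$, and noticing that pairing the index $p$ with $\hat u$ through $Q_k\hat u=e_k$ is exactly what isolates $H_{k_0}^{(r)}$. The rest is formal or is read straight off the hypotheses $D=0$, $\rk\jac\tilde H=r$, and $(\jac\tilde H)(u)=\tilde M^{(1)}$.
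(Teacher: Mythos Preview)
Your argument is correct, and for part (ii) it coincides with the paper's. For part (i) both you and the paper take the same candidate $v$ (the point $u$ from Lemma~\ref{Irlem} with $L_1(u)=1$, $L_i(u)=0$ for $i\ge 2$) and both use the bilinearity identity $(\jac\tilde H)\cdot v=(\jac\tilde H)|_{x=v}\cdot x$; the divergence is entirely in how one shows that $v':=(v_1,\dots,v_r)\neq 0$.

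The paper's route is much shorter and bypasses your perturbation analysis entirely. Using $CB=0$ from \eqref{D0CB0} and Euler's relation, one computes in a single line
\[
CAv' \;=\; CAv' + CBv'' \;=\; C\,(A\,|\,B)\,v \;=\; C\begin{pmatrix}x_1\\ \vdots\\ x_r\end{pmatrix}
\;=\; 2\begin{pmatrix}\tilde H_{r+1}\\ \vdots\\ \tilde H_m\end{pmatrix},
\]
the last equality because $D=0$ forces $\tilde H_i\in K[x_1,\dots,x_r]$ for $i>r$. If $C\neq 0$ then some $\tilde H_i$ with $i>r$ is nonzero (here $\chr K\neq 2$ is used), so the right-hand side is nonzero and hence $v'\neq 0$. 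Your approach instead substitutes $x=u+(\bar y;0)$, extracts the degree-$2$ part of the vanishing $(r{+}1)$-minors, and contracts against $\hat u$ via the identities $Q_k\hat u=e_k$; this recovers $H_{k_0}^{(r)}=0$ and hence $C=0$. That works, but it reproves from scratch a consequence that the paper gets directly from the already-established relations $CB=0$, $D=0$ together with Euler's identity.
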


\begin{proof}
(\emph{i}) Take $v$ as in Lemma \ref{Irlem}, and write $v = (v',v'')$,
such that $v' \in K^r$ and $v'' \in K^{n-r}$. Since $\tilde{H}$ is quadratic
homogeneous, we have
$$
(\jac \tilde{H}) \cdot v = (\jac \tilde{H})|_{x=v} \cdot x = \tilde{M}^{(1)} \cdot x
= \left( \begin{array}{cc} I_r & 0 \\ 0 & 0 \end{array} \right) \cdot x.
$$

From $CB = 0$, we deduce that
$$
CA\,v' = CA\,v' + CB\,v'' = C\,(A|B)\,v
= C \left( \begin{smallmatrix} x_1 \\ x_2 \\[-5pt] \vdots \\ x_r \end{smallmatrix} \right)
= 2 \left( \begin{smallmatrix} \tilde{H}_{r+1} \\ \tilde{H}_{r+2} \\[-5pt] \vdots \\
    \tilde{H}_{m} \end{smallmatrix} \right).
$$
Since $C\neq 0$, we have $\tilde{H}_{i} \ne 0$ for some $i > r$, and thus the right-hand side is nonzero. Therefore $v' \ne 0$ and the conclusion (\emph{i}) follows.

(\emph{ii}) We may assume that $C\neq 0$. Take $v'$ as in (i). From $D=0$, we deduce that $C\cdot v'=(C|D)\cdot v=0$, which yields (\emph{ii}).
\end{proof}

\begin{lemma} \label{BCrkr}
Use the same notations as in Lemma \ref{Ccoldep1}. Suppose that $\rk B + \rk C = r$ and that the columns of $C$ are dependent
over $K$. Then the column space of $B$ (over $K(x)$) contains a nonzero constant vector (over $K$).
\end{lemma}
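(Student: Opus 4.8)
The plan is to identify the column space of $B$ over $K(x)$ with the right kernel of $C$, and then to extract the required constant vector from that kernel by using the hypothesis that the columns of $C$ are dependent over $K$.

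First I would work over the field $K(x)$, so that $\rk B$ and $\rk C$ are genuine dimensions of vector spaces. The relation $CB = 0$ recorded in \eqref{D0CB0} shows that every column of $B$ is annihilated by $C$, hence the column space of $B$ is contained in $N := \{\,v \in K(x)^r : Cv = 0\,\}$. Next I would compare dimensions: the column space of $B$ has dimension $\rk B$ over $K(x)$, while $N$ has dimension $r - \rk C$ (as $C$ has $r$ columns). The hypothesis $\rk B + \rk C = r$ forces these two numbers to coincide, so the inclusion above must be an equality, i.e.\ the column space of $B$ equals $N$.

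Finally, since the columns of $C$ are dependent over $K$, there is a nonzero vector $w \in K^r$ with $Cw = 0$, that is, $w \in N$; by the previous step $w$ lies in the column space of $B$. As $w$ is a nonzero vector with entries in $K$, this is exactly the claim.

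The whole argument is elementary linear algebra over $K(x)$: it uses neither the hypothesis $\chr K \ne 2$ nor the fact that $\tilde M$ is a Jacobian matrix, only the formal identity $CB = 0$ from \eqref{D0CB0}. The one step that is not pure bookkeeping is the dimension count that promotes the inclusion ``column space of $B$'' $\subseteq N$ to an equality, and this is where $\rk B + \rk C = r$ enters (the $K$-dependence of the columns of $C$ being invoked only at the very end); I do not foresee any genuine obstacle.
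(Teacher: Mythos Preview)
Your proof is correct and follows essentially the same approach as the paper: both identify the column space of $B$ with $\ker C$ via the dimension count $\rk B + \rk C = r$ together with $CB=0$, and then observe that a $K$-dependence among the columns of $C$ yields a nonzero $w\in K^r$ in this kernel. Your remark that neither $\chr K\ne 2$ nor the Jacobian structure is used is also accurate.
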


\begin{proof}
From $\rk C + \rk B = r$ and $CB = 0$, we deduce that $\ker C$ is equal
to the column space of $B$. Hence any $w \in K^r$
such that $Cw = 0$ is contained in the column space of $B$.
\end{proof}

Now we are in the position to prove Theorem \ref{rkr}.

\begin{proof}[Proof of Theorem \ref{rkr}]
By Lemma \ref{Fqlem}, we may assume that $K$ has at least
$r$ elements. Let $M = \jac H$ and take $S$ and $T$ as in Lemma \ref{Irlem}.
Then $S (\jac H) T$ is as $\tilde{M}$ in Lemma \ref{Irlem}. Let $\tilde{H} := SH(Tx)$.
Then $\jac \tilde{H} = S (\jac H) |_{x=Tx} T$ is as $\tilde{M}$ in Lemma \ref{Irlem}
as well, up to replacing $L_i$ by $L_i(Tx)$.

Take $\tilde{M} = \jac \tilde{H}$ and take $A$, $B$, $C$, $D$ as in \eqref{ABCD}.
We distinguish four cases:
\begin{itemize}

\item \emph{The column space of $B$ contains a nonzero constant vector.}

Then there exists an $U \in \GL_m(K)$, such that the column space of
$U \tilde{M}$ contains $e_1$, because $D = 0$. Consequently, the matrix which
consists of the last $m-1$ rows of $\jac (U \tilde{H}) = U \tilde{M}$ has rank $r-1$.
By induction on $r$, it follows that we may choose $U$ such that
only
$$
\tfrac12(r-1)^2 + \tfrac12(r-1) = \tfrac12 r^2 - \tfrac12 r
$$
rows of $\jac (U \tilde{H})$ may be nonzero besides the first row of
$\jac (U \tilde{H})$. So $U \tilde{H}$ is as $\tilde{H}$ in (1) of
Theorem \ref{rkr}.

\item \emph{The rows of $B$ are dependent over $K$ in pairs.}

If $B \ne 0$, then the column space of $B$ contains a nonzero
constant vector, and the case above applies since $D=0$.

So assume that $B = 0$. Then only the first $r$ columns of
$\jac \tilde{H}$ may be nonzero. Since
$\rk \jac \tilde{H} = r$, the first $r$ columns of $\jac \tilde{H}$
are indeed nonzero.
Furthermore, it follows from $\jac \tilde{H} \cdot x = 2 \tilde{H}$
that $\chr K \neq 2$. So $\tilde{H}$ is as in (2) of Theorem \ref{rkr},
and the result follows from Lemma \ref{23}.

\item \emph{$\chr K = 2$ and $\rk B \le 1$.}

If the rows of $B$ are dependent over $K$ in pairs, then the second case
above applies, so assume the converse. Then on account of \cite[Theorem 2.1]{bondt17},
the columns of $B$ are dependent over $K$ in pairs. As $D = 0$, there exists an
$U'' \in \GL_{n-r}(K)$ such that only the first column
of
$
\binom{B}{D} U''
$
may be nonzero. Hence there exists an $U \in \GL_n(K)$
such that only the first $r+1$ columns of $(\jac \tilde{H})\, U$
may be nonzero. Consequently, $\tilde{H}(Ux)$ is as
$\tilde{H}$ in (3) of Theorem \ref{rkr}, and the result follows from Lemma \ref{23}.

\item \emph{None of the above.}

We first show that $\rk C \le r - 2$. So assume that $\rk C \ge r - 1$. Since $CB=0$, we have
$\rk C + \rk B \le r$, and thus $\rk B \le 1$. As the last case
above does not apply, $\chr K \ne 2$. By Lemma \ref{Ccoldep1}, the columns of $C$ are dependent over $K$. As the first case above does not
apply, it follows from Lemma \ref{BCrkr} that $\rk C + \rk B < r$.
So $B = 0$, which is the second case above, a contradiction.
So $\rk C \le r - 2$ indeed.

By induction on $r$, we may assume that $C$ has at most
$$
\tfrac12 (r-2)^2 + \tfrac12 (r-2) = \tfrac12 r^2 - \tfrac32 r + 1
$$
nonzero rows. As $A$ has $r$ rows, there exists an $U \in \GL_m(K)$ such that
$U\tilde{H}$ is as $\tilde{H}$ in (1) of Theorem \ref{rkr}.

\end{itemize}
The last claim of Theorem \ref{rkr} follows from Lemma \ref{23}
and the fact that $\frac12 r^2 - \frac12 r + 1 = r$ if $1 \le r \le 2$.
\end{proof}

\section{Quadratic homogeneous polynomial maps $H$ with $\rk \jac  H=3$}

In this section, we classify all quadratic homogeneous polynomial maps $H$ with $\rk \jac H=3$ for any characteristic.

\begin{theorem} \label{rk3}
Let $H \in K[x]^m$ be a quadratic homogeneous polynomial map with $\rk \jac H = 3$. Then there are $S \in \GL_m(K)$ and $T \in \GL_n(K)$,
such that for $\tilde{H} := S H(Tx)$, one of the following statements holds:
\begin{enumerate}[(1)]

\item Only the first $3$ rows of $\jac \tilde{H}$ may be nonzero;

\item Only the first $4$ rows of $\jac \tilde{H}$ may be nonzero, and
$$
(\tilde{H}_1,\tilde{H}_2,\tilde{H}_3,\tilde{H}_4) =
(\tilde{H}_1, \tfrac12 x_1^2, x_1 x_2,  \tfrac12 x_2^2)
$$
(in particular, $\chr K \ne 2$);

\item Only the first $4$ rows of $\jac \tilde{H}$ may be nonzero,
$$
\jac (\tilde{H}_1,\tilde{H}_2,\tilde{H}_3,\tilde{H}_4) =
\jac (\tilde{H}_1, x_1 x_2, x_1 x_3, x_2 x_3)
$$
and $\chr K = 2$;

\item Only the first $4$ rows of $\jac \tilde{H}$ may be nonzero, and
$$
\big(\tilde{H}_1,\tilde{H}_2,\tilde{H}_3,\tilde{H}_4\big) =
\big( x_1 x_3 + c x_2 x_4, x_2 x_3 - x_1 x_4,
\tfrac12 x_3^2 + \tfrac{c}2 x_4^2, \tfrac12 x_1^2 + \tfrac{c}2 x_2^2 \big)
$$
for some nonzero $c \in K$ (in particular, $\chr K \ne 2$).

\item $\chr K\neq 2$ and only the first $3$ columns of $\jac \tilde{H}$ are nonzero.

\item $\chr K= 2$ and only the first $4$ columns of $\jac \tilde{H}$ are nonzero.

\end{enumerate}
Conversely, $\rk \jac \tilde{H} \le 3$ in each of the five statements above.
\end{theorem}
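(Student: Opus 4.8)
\emph{Overall strategy.} The plan is to obtain the classification from Theorem~\ref{rkr} with $r=3$, then to analyse the remaining ``four row'' case by hand, and finally to settle the converse by direct computation. Concretely, I would first apply Theorem~\ref{rkr} with $r=3$: after replacing $H$ by $SH(Tx)$ we are in one of three situations --- only the first $\tfrac12 r^2-\tfrac12 r+1=4$ rows of $\jac H$ may be nonzero; or $\chr K\ne 2$ and only the first $3$ columns are nonzero; or $\chr K=2$ and only the first $4$ columns are nonzero. The last two are exactly (5) and (6), so it remains to treat the first, i.e.\ we may assume $m=4$ and $\rk\jac H=3$. (When $\#K<3$ one extends the base field so that Lemma~\ref{Irlem} applies and then descends, in the spirit of Lemma~\ref{Fqlem}; over $\mathbb F_2$ only the forms (1), (3), (6) are possible in any case.)

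\emph{The four-row case.} Here I would apply Lemma~\ref{Irlem} to $\jac H$ and write $\jac\tilde H=\left(\begin{smallmatrix}A&B\\C&D\end{smallmatrix}\right)$ as in \eqref{ABCD}, with $A\in\Mat_3(K[x])$, $C$ a single row, $D=0$, $CB=0$, and the top three rows $(A\mid B)$ already of rank $3$. If $C=0$, the fourth row of $\jac\tilde H$ vanishes and we are in (1). If $C\ne 0$, then $\tilde H_4$ is a nonzero quadratic form in $x_1,x_2,x_3$ only (since $\partial_j\tilde H_4=0$ for $j>3$) and $C$ is its partial gradient; moreover the fourth row is a $K(x)$-combination of the first three, so $CB=0$ and --- when $\chr K\ne 2$ --- Euler's identity $\jac\tilde H\cdot x=2\tilde H$ tie $\tilde H_1,\dots,\tilde H_4$ together tightly. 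The case split I would use is on $\chr K$ and on $\rk B$: since $CB=0$ forces $\rk B+\rk C\le 3$ with $\rk C=1$, this is a split on the ``rank'' of the quadratic form $\tilde H_4$, i.e.\ $\rk B\in\{0,1,2\}$. When $B=0$ one gets (5) or (6) exactly as in the corresponding bullets of the proof of Theorem~\ref{rkr}. Otherwise the key inputs are Lemma~\ref{Ccoldep1} (for $\chr K\ne 2$ the columns of $C$ are dependent over $K$, so $\tilde H_4$ depends on at most two variables after a $\GL_3(K)$ change) and Lemma~\ref{BCrkr} (if $\rk B+\rk C=3$ a nonzero constant vector enters the column space of $B$, which after renormalising returns us to (1)), together with the rank-$2$ classification of \cite{bondt17} applied to the relevant sub-blocks. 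Running through the cases, whenever we are not in (1), (5) or (6), three of the $\tilde H_i$ should reduce --- up to reordering and a $K$-linear coordinate change --- to one of the quadratic triples $(\tfrac12 x_1^2,\,x_1x_2,\,\tfrac12 x_2^2)$ (case (2)) or $(x_1x_2,\,x_1x_3,\,x_2x_3)$ with $\chr K=2$ (case (3)), or else all four are pinned into the four-variable shape of (4), with a parameter $c\in K^*$ that in general cannot be scaled away.

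\emph{The converse.} For each listed normal form I would check $\rk\jac\tilde H\le 3$ directly. In (1) only three rows are nonzero; (5) and (6) are Lemma~\ref{23}. In (2) and (3) the sub-map $(\tilde H_2,\tilde H_3,\tilde H_4)$ has Jacobian of rank at most $2$ --- in (2) because it involves only $x_1,x_2$, and in (3) because the $3\times 3$ Jacobian determinant of $(x_1x_2,x_1x_3,x_2x_3)$ equals $-2x_1x_2x_3$, which vanishes in characteristic $2$ --- so adjoining the single row $\jac\tilde H_1$ keeps the rank at most $3$. In (4) one verifies the identity $\tilde H_1^2+c\,\tilde H_2^2=4\,\tilde H_3\tilde H_4$; differentiating it shows that $\jac\tilde H$ annihilates the generically nonzero vector $(2\tilde H_1,\,2c\,\tilde H_2,\,-4\tilde H_4,\,-4\tilde H_3)$, whence $\rk\jac\tilde H\le 3$. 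The characteristic restrictions are then visible from the coefficients $\tfrac12$ in (2), $\tfrac12,\tfrac c2$ in (4), and $2\tilde H=0$ in (3).

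\emph{Main obstacle.} The hard part is the $C\ne 0$ analysis: keeping careful track of the successive $\GL_m(K)$ and $\GL_n(K)$ reductions while preserving the normalisation of Lemma~\ref{Irlem}, and above all handling characteristic $2$ --- where Euler's identity degenerates and $\rk\jac H<m$ is no longer equivalent to algebraic dependence of the $H_i$ --- together with the emergence of the one-parameter family (4), which is exactly what obstructs a cleaner statement over $\bar K$.
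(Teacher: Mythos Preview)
Your overall strategy coincides with the paper's: set up $\jac\tilde H=\left(\begin{smallmatrix}A&B\\C&D\end{smallmatrix}\right)$ via Lemma~\ref{Irlem}, then exploit $D=0$, $CB=0$, Lemmas~\ref{Ccoldep1}, \ref{Ccoldep2}, \ref{BCrkr} and the rank-$2$ classification. Applying Theorem~\ref{rkr} first to reduce to $m\le 4$ (so $C$ is a single row) is a harmless simplification the paper does not make explicitly. The converse argument is fine, except that in~(4) the gradient of the relation $\tilde H_1^2+c\tilde H_2^2-4\tilde H_3\tilde H_4$ annihilates $\jac\tilde H$ from the \emph{left}, giving a dependence among the rows; this still yields $\rk\jac\tilde H\le 3$.

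There are, however, two genuine gaps in the forward direction. First, the case ``a nonzero constant vector lies in the column space of $B$'' does \emph{not} return you to~(1) alone: after arranging $e_1$ in that column space, the remaining three rows have Jacobian rank~$2$, and applying Theorem~\ref{rkr} (or \cite[Thm.~2.3]{bondt17}) to them produces any of~(1), (2) or~(3). This is exactly where~(2) and~(3) enter; they do not simply ``emerge'' from a generic $\rk B=1$ analysis. Second, and more seriously, the emergence of the one-parameter family~(4) is not just bookkeeping. In the paper this is the case where the columns of $B$ are pairwise dependent over $K$ with $B,C\ne 0$ and $\chr K\ne 2$: one first diagonalises $\tilde H_4=\tfrac12 x_1^2+\tfrac{c}{2}x_2^2$, then splits on $c=0$ (handled via \cite[Thm.~2.3]{bondt17} applied to $\tilde H|_{x_1=1}$) versus $c\ne 0$. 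The latter requires Lemma~\ref{rk3calc}, a page-long coefficient-by-coefficient elimination using that many specific monomials in $\det\jac\tilde H$ must vanish; without this computation (or an equivalent), you cannot conclude that the list is exhaustive. Your sketch flags this as the hard part but proposes no mechanism for it.
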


\begin{corollary} \label{rktrdeg}
Let $H \in K[x]^m$ be quadratic homogeneous  such that
$\rk \jac H \le 3$. If $\chr K \neq 2$, then $\rk \jac H = \trdeg_K K(H)$.
\end{corollary}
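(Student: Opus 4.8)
The plan is to show two inequalities. The inequality $\trdeg_K K(H) \le \rk \jac H$ is standard and holds over any field: if $\rk \jac H = r$, then after applying an invertible $K$-linear change to the target (which does not change $K(H)$) one may assume only the first $r$ rows of $\jac H$ are nonzero, whence $H_{r+1}, \ldots, H_m$ are constants (here $0$, by homogeneity), so $K(H) = K(H_1,\ldots,H_r)$ has transcendence degree at most $r$. The real content is the reverse inequality $\rk \jac H \le \trdeg_K K(H)$, and the natural strategy is to verify it case by case using the classification in Theorem \ref{rk3} (for $r = 3$) together with the analogous — and easier — classification for $r \le 2$ recorded in \cite{bondt17}, since the hypothesis is $\rk \jac H \le 3$, not $=3$. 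In each normal form one exhibits $r = \rk \jac H$ of the components $\tilde H_i$ that are algebraically independent over $K$; since $\trdeg_K K(\tilde H) = \trdeg_K K(H)$ (the transformations $S$ and $T$ are $K$-linear and invertible), this suffices.

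First I would dispose of the forms where $\jac \tilde H$ has only $r$ nonzero columns, i.e.\ case (5) of Theorem \ref{rk3} and its $r \le 2$ analogues: here $\tilde H$ involves only $x_1,\ldots,x_r$, and since $\chr K \ne 2$ we recover $\tilde H$ from $\jac \tilde H$ via $\jac \tilde H \cdot x = 2\tilde H$; one then checks directly (or invokes that a nonzero quadratic Keller-type constraint forces independence — more concretely, that the $r \times r$ principal minor of $\jac \tilde H$ is a nonzero polynomial) that $\tilde H_1,\ldots,\tilde H_r$ are algebraically independent, because a nontrivial algebraic relation among them would force $\det$ of the relevant $r\times r$ Jacobian block to vanish identically, contradicting $\rk \jac \tilde H = r$. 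Then I would treat cases (1)--(4) of Theorem \ref{rk3} individually. In case (2), $\tilde H_2 = \tfrac12 x_1^2$, $\tilde H_3 = x_1 x_2$, $\tilde H_4 = \tfrac12 x_2^2$ satisfy $\tilde H_3^2 = 4 \tilde H_2 \tilde H_4$, so these three are dependent — hence one must instead pick $\tilde H_1$ together with two of $\tilde H_2, \tilde H_3, \tilde H_4$, say $\tilde H_2$ and $\tilde H_3$, and argue that $\{\tilde H_1, x_1^2, x_1 x_2\}$ is algebraically independent by a Jacobian-rank argument: their Jacobian has rank $3$ because $\rk \jac \tilde H = 3$ and $\jac(\tilde H_1,\tilde H_2,\tilde H_3,\tilde H_4)$ already has rank $3$, while $\tilde H_4$ contributes nothing new to the row space once $\tilde H_2, \tilde H_3$ are present over $K(x)$. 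Case (4) is similar: among the four explicit quadrics there is a syzygy, so one selects a rank-$3$ subfamily and checks independence via the Jacobian. Case (1) and the generic part of case (3) reduce to the $\rk = 3$, "$r$ rows" situation where $(\tilde H_1,\tilde H_2,\tilde H_3)$ has $\rk \jac = 3$, and then I would use the general fact that for polynomials $f_1, \ldots, f_r$ over a field of characteristic $\ne 2$ (indeed the parity only matters in case (3), which we handle separately), $\rk_{K(x)} \jac(f_1,\ldots,f_r) = r$ implies $f_1,\ldots,f_r$ algebraically independent — this is the "easy" direction of the Jacobian criterion and is valid in all characteristics.

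The main obstacle is case (3), where $\chr K = 2$: the clean equality $\jac \tilde H \cdot x = 2\tilde H = 0$ destroys the usual dictionary between $\tilde H$ and $\jac \tilde H$, and more seriously the easy direction of the Jacobian criterion can fail in positive characteristic (e.g.\ $x^p$ has zero derivative). However, the corollary only claims the identity $\rk \jac H = \trdeg_K K(H)$ under the hypothesis $\chr K \ne 2$, so case (3) and case (6) are vacuous here and need not be addressed at all; I would note this explicitly at the start of the proof, which reduces the work to cases (1), (2), (4), (5) of Theorem \ref{rk3} plus the $r \le 2$ normal forms from \cite{bondt17}. With characteristic $\ne 2$ in force, the only genuinely delicate point is bookkeeping the syzygies in cases (2) and (4) correctly so that the $r$ components one selects really are algebraically independent; this is routine but must be done carefully, and the Jacobian-rank criterion (valid in characteristic $0$ and, for the easy direction, in all characteristics) does all the heavy lifting once the right subfamily is chosen.
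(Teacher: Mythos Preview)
You have the two inequalities reversed. The inequality that holds over every field is $\rk \jac H \le \trdeg_K K(H)$; this is the direction of the Jacobian criterion that survives positive characteristic (recall $H = x_1^p$ in characteristic $p$, where $\trdeg = 1 > 0 = \rk$). The paper takes precisely this as its starting point and then proves the genuine content $\trdeg_K K(H) \le 3$ case by case from Theorem \ref{rk3}: in cases (1), (2), (5) this is immediate (at most three nonzero components, or only three variables appear, or the obvious syzygy $\tilde H_3^2 = 4\tilde H_2\tilde H_4$), and in case (4) one writes down the explicit relation $\tilde H_1^2 + c\tilde H_2^2 - 4\tilde H_3\tilde H_4 = 0$.

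Your one-line argument for the direction you call ``standard'', namely $\trdeg \le \rk$, is incorrect. The claim that an invertible $K$-linear change of target can leave only the first $r$ rows of $\jac H$ nonzero is false: the rows of $\jac H$ are $K(x)$-linearly dependent when $\rk \jac H = r < m$, but in general they are \emph{not} $K$-linearly dependent. A concrete counterexample is $H = (x_1^2,\, x_1x_2,\, x_2^2)$ in characteristic $\ne 2$: here $\rk \jac H = 2$, yet the three Jacobian rows $(2x_1,0)$, $(x_2,x_1)$, $(0,2x_2)$ are $K$-linearly independent, so no $S \in \GL_3(K)$ kills a row. Indeed, Theorem \ref{rkr} itself says the best one can achieve in general is $\tfrac12 r^2 + \tfrac12 r$ nonzero rows, not $r$.

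Consequently your case analysis, which in each normal form exhibits $r$ components with Jacobian of rank $r$ and concludes they are algebraically independent, only re-establishes $\trdeg \ge \rk$ --- the direction that needed no proof --- while the actual content, $\trdeg \le \rk$, remains unaddressed. To repair the argument you should instead use the normal forms to bound $\trdeg$ from \emph{above}: in each $\chr K \ne 2$ case of Theorem \ref{rk3}, either only three components are nonzero, or only three variables occur, or there is an explicit algebraic relation among the four components, forcing $\trdeg_K K(\tilde H) \le 3$.
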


\begin{proof}
Since $\rk \jac H \le \trdeg_K K(H)$, it suffices to show that
$\trdeg_K K(H) \le 3$ if $\chr K \neq 2$.
In (4) of Theorem \ref{rk3}, we have $\trdeg_K K(H) \le 3$ because
$$
\tilde{H}_1^2 + c \tilde{H}_2^2 - 4 \tilde{H}_3 \tilde{H}_4 = 0.
$$
In the other cases of Theorem \ref{rk3} where $\chr K \neq 2$,
$\trdeg_K K(H) \le 3$ follows trivially.
\end{proof}

\begin{lemma} \label{Ccoldep2}
Let $\tilde{H} \in K[x]^m$, such that $\jac \tilde{H}$ is as $\tilde{M}$ in
Lemma \ref{Irlem}, and write $\tilde{M} = \left( \begin{array}{cc} A & B \\ C & D \end{array} \right)$
as in \eqref{ABCD}. If $\rk C = 1$ and $r$ is odd, then the columns of $C$ are dependent over $K$.
\end{lemma}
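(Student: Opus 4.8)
The plan is to dispose of the case $\chr K \ne 2$ immediately: there the columns of $C$ are already dependent over $K$ by Lemma \ref{Ccoldep1}(\emph{ii}), which uses neither $\rk C = 1$ nor the parity of $r$. So all of the content lies in characteristic $2$, and I would spend the rest of the argument there.

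Assume $\chr K = 2$, and write $C_{i,\cdot}$ for the $i$-th row of $C$. First I would record the Hessian structure of the bottom rows of $\jac\tilde H$. Since $D = 0$ by \eqref{D0CB0}, we have $\partial_j\tilde H_{r+i} = 0$ for $j > r$; feeding this into $\partial_j\partial_k\tilde H_{r+i} = \partial_k\partial_j\tilde H_{r+i}$ shows that each linear form $\partial_j\tilde H_{r+i}$ with $j \le r$ involves only $x_1,\dots,x_r$, so that $C_{i,\cdot} = (x_1,\dots,x_r)\,G^{(i)}$ with $G^{(i)} \in \Mat_r(K)$ the leading $r\times r$ principal submatrix of $\hess\tilde H_{r+i}$. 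The characteristic is decisive here: the Hessian of a quadratic form in characteristic $2$ is \emph{alternating} (symmetric with zero diagonal), hence so is each $G^{(i)}$.

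The core step converts $\rk C = 1$ into a statement about the matrices $G^{(i)}$: I claim they are all $K$-scalar multiples of one nonzero alternating matrix. Since $C \ne 0$, some $G^{(i_0)} \ne 0$; normalise $i_0 = 1$. A nonzero alternating matrix has rank $\ge 2$ (its rank is even), so two components of $C_{1,\cdot} = x'\tp G^{(1)}$ are $K$-independent linear forms $\phi,\psi$, where $x' := (x_1,\dots,x_r)$. Because $\rk C = 1$, each $C_{i,\cdot}$ is $\lambda_i\,C_{1,\cdot}$ for some $\lambda_i \in K(x')$; the components of $C_{i,\cdot}$ corresponding to $\phi,\psi$ are then $\lambda_i\phi$ and $\lambda_i\psi$, which are polynomials, so writing $\lambda_i = f/g$ in lowest terms with $f,g$ homogeneous of equal degree forces $g \mid \phi$ and $g \mid \psi$; since $\phi,\psi$ are non-proportional linear forms this gives $g \in K^*$, i.e.\ $\lambda_i \in K$. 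Hence $G^{(i)} = \lambda_i G^{(1)}$ for all $i$.

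The parity of $r$ enters only at the end: an alternating matrix of odd order is singular, so $G^{(1)}w = 0$ for some $w \in K^r\setminus\{0\}$, and then $(Cw)_i = x'\tp G^{(i)}w = \lambda_i\,x'\tp G^{(1)}w = 0$ for every $i$, so $Cw = 0$ and the columns of $C$ are dependent over $K$. I expect the main obstacle to be the core step: ruling out a non-constant proportionality factor $\lambda_i$, together with the degenerate possibility that all components of $x'\tp G^{(1)}$ are proportional to a single linear form — the latter being exactly where one must use that a nonzero alternating matrix cannot have rank $\le 1$.
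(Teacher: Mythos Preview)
Your proof is correct and follows the same arc as the paper's: dispose of $\chr K \ne 2$ via Lemma~\ref{Ccoldep1}, then in characteristic $2$ exploit that the Hessians of $\tilde H_{r+1},\dots,\tilde H_m$ are alternating, establish that the rows of $C$ are $K$-proportional, and finish using that an alternating matrix of odd order $r$ is singular.

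The one point of departure is how you obtain the $K$-proportionality of the rows of $C$. The paper simply invokes Theorem~\ref{rkr} (with rank parameter $1$) for the quadratic homogeneous map $(\tilde H_{r+1},\dots,\tilde H_m)$, whose full Jacobian is $(C\mid 0)$ since $D=0$; this immediately gives that the rows of $C$ are pairwise $K$-dependent. You instead argue directly: since a nonzero alternating $G^{(1)}$ has rank $\ge 2$, the row $C_{1,\cdot}=x'^{\rm t}G^{(1)}$ has two $K$-independent linear entries $\phi,\psi$, and the rank-$1$ relation $C_{i,\cdot}=\lambda_i C_{1,\cdot}$ then forces $\lambda_i\in K$ via a gcd argument. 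Your route is self-contained and avoids appealing to Theorem~\ref{rkr}; the paper's is shorter by leaning on that already-established result.
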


\begin{proof}
The case where $\chr K \ne 2$ follows from Lemma \ref{Ccoldep1}, so
assume that $\chr K = 2$.
Since $\rk C = 1 = \frac12 \cdot 1^2 + \frac12 \cdot 1$, we deduce from
Theorem \ref{rkr} that the rows of $C$ are dependent over $K$ in pairs.
Say that the first row of $C$ is nonzero.

For any $f\in K[x]$, we denote by $\hess (f):=(\frac{\partial^2 f}{\partial x_i\partial x_j})_{n\times n}$ the Hessian matrix of $f$.
As $r$ is odd, it follows from Proposition \ref{evenrk} and Remark \ref{evenrk1} below that
$\rk \hess \tilde{H}_{r+1} < r$. Hence there exists a $w \in K^r$
such that $(\hess \tilde{H}_{r+1}) \, w = 0$, and thus
$$
(\jac \tilde{H}_{r+1}) \, w = x\tp (\hess \tilde{H}_{r+1})w \, = 0.
$$
Since the row space of $C$ is spanned by $\jac \tilde{H}_{r+1}$, we have $C\,w = 0$.
\end{proof}

It is well-known that, if $\chr K\neq 2$ and $M \in \Mat_{n}(K)$ is symmetric matrix, then there exists a $T \in \GL_{n}(K)$, such that $T\tp M T$ is a diagonal matrix.

\begin{proposition} \label{evenrk}
Let $M \in \Mat_{n}(K)$ be a symmetric matrix with zeroes on the diagonal. Then $\rk M$ is even, and
there exists a lower triangular matrix $T \in \Mat_{n}(K)$ with
ones on the diagonal, such that $T\tp M T$ is the product of a symmetric
permutation matrix and a diagonal matrix.
\end{proposition}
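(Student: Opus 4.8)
The plan is to prove this by induction on $n$, essentially constructing a symplectic basis for the alternating form given by $M$, but with the extra constraint that the change of basis be \emph{lower} triangular; that constraint is precisely why a permutation matrix must reappear on the right of the normal form. If $n\le 1$, or more generally if $M=0$, take $T=I$, $P=I$, $D=0$. So assume $M\neq 0$; since $M$ has zero diagonal, some off-diagonal entry is nonzero. First I would fix the pivot: let $j$ be the \emph{largest} index with $j$-th column of $M$ nonzero (equivalently, with $j$-th row nonzero), and let $i<j$ be the \emph{largest} index with $a:=M_{ij}\neq 0$ — such an $i$ exists because column $j$ is nonzero while $M_{jj}=0$ and $M_{pj}=0$ for $p>j$. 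These maximality choices force $M_{jp}=0$ for every $p>i$ with $p\neq j$ (by maximality of $i$ together with the vanishing of the rows beyond $j$), and symmetrically $M_{ip}=0$ for $p>j$. Hence the entries to be cleared in row and column $j$ all sit in columns of index $<i$, and those in row and column $i$ all sit in columns of index $<j$.

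The core step clears rows and columns $i$ and $j$ (except the positions $(i,j)$ and $(j,i)$) by two compound elementary lower unitriangular matrices. Put $T_2:=I-\sum_{k<i}(M_{jk}/a)\,e_ie_k\tp$; the elementary factors commute and each is lower unitriangular (it uses $i>k$), and a direct computation shows that $T_2\tp M T_2$ equals $M$ except that the entries $(j,k)$, $(k,j)$ with $k<i$ have become $0$ — so rows and columns $j$ now have $a$ in position $i$ and nothing else — while the complementary $(n-2)\times(n-2)$ block is modified only in its rows and columns of index $<i$. Note that $T_2$ leaves row and column $i$ entirely fixed, because $M_{ii}=0$. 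Next put $T_3:=I-\sum_{k<j,\ k\neq i}(M_{ik}/a)\,e_je_k\tp$, lower unitriangular since $j>k$; the analogous computation shows $T_3$ clears the remaining entries of row and column $i$, leaves row and column $j$ fixed, and — the point that makes the induction work — leaves the complementary block untouched, since after $T_2$ that block already has vanishing $j$-th row and column. Here one uses the hypothesis exactly as it is needed in the application (Lemma~\ref{Ccoldep2}, where $\chr K=2$): in characteristic $2$ a symmetric matrix with zero diagonal is an alternating matrix, and congruence preserves being alternating — in particular the zero diagonal — so the complementary block is again symmetric with zero diagonal, and the induction hypothesis applies to it.

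Writing $T_1:=T_2T_3$, the matrix $T_1\tp M T_1$ splits: on rows and columns $\{i,j\}$ it is the $2\times2$ matrix with both off-diagonal entries equal to $a$, the rest of rows and columns $i$ and $j$ vanish, and on the remaining $n-2$ indices sits a symmetric zero-diagonal matrix $M'$. By induction there is a lower unitriangular $T'$ on those indices taking $M'$ to a product of a symmetric permutation matrix and a diagonal matrix; extended by the identity in rows and columns $i,j$ it remains lower unitriangular and fixes the block on $\{i,j\}$. Then $T:=T_1T'$ is lower unitriangular, and $T\tp M T$ is supported on a family of disjoint pairs of off-diagonal positions $\{(i_\ell,j_\ell),(j_\ell,i_\ell)\}$ carrying values $a_\ell\in K^*$. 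Such a matrix is precisely $PD$, where $P$ is the permutation matrix of the involution $\prod_\ell(i_\ell\ j_\ell)$ (symmetric, being an involution) and $D$ is diagonal with $D_{i_\ell i_\ell}=D_{j_\ell j_\ell}=a_\ell$ and zeroes elsewhere. Finally $\rk M=\rk(T\tp M T)=\rk(PD)=\rk D$, which is twice the number of pairs, hence even.

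The main obstacle, and the reason for the bookkeeping, is the lower-triangularity restriction: ordinary symplectic Gram--Schmidt is free to reorder the basis and to clear entries both above and below a pivot, whereas here one may only add higher-indexed rows and columns to lower-indexed ones. This is what forces $j$ and $i$ to be chosen as \emph{maximal} indices, and it forces the verification that the two clearing rounds do not re-pollute the already normalised part — in particular that $T_3$ leaves the complementary block completely alone, so that the two-dimensional summand splits off cleanly and the induction can proceed. The characteristic enters only through the remark that congruence preserves the zero diagonal, which is valid because in the relevant situation the matrix is alternating.
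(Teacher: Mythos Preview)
Your argument is essentially the paper's own proof, carried out in more detail: both induct on $n$, pick the lowest nonzero entry in the last nonzero column as pivot, and clear the two corresponding rows and columns via lower unitriangular congruence. You are more careful than the paper in noting that the zero-diagonal condition survives congruence only in characteristic $2$ --- a subtlety the paper's terse proof elides but which is harmless since the proposition is invoked only in that case (Remark~\ref{evenrk1} and Lemma~\ref{Ccoldep2}).
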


\begin{proof} If the last column of $M$ is zero, then we have reduced the problem to the leading
principal submatrix of size $n-1$.  And if the last column of $M$ is not zero, let $i$ be the index of the lowest nonzero entry in the last column of $M$, and use $M_{in}$ and $M_{ni}$ as pivots to clean the rest elements of columns $i$ and $n$ and rows $i$ and $n$ of $M$ to obtain a matrix $\hat{M}$, and then we reduce the problem to the submatrix obtained by removing the last row and last column of $\hat{M}$. The conclusion follows by induction on $n$.
\end{proof}

\begin{remark} \label{evenrk1}  When $\chr K=2$, the Hessian
matrix of a quadratic homogeneous polynomial is symmetric with zeroes on the diagonal,
and thus is of even rank.
\end{remark}

\begin{lemma} \label{rk3calc}
Let $H \in K[x_1,x_2,x_3,x_4]^4$ be quadratic homogeneous with
$$
\jac H_4 = (\, x_1 ~ c x_2 ~ 0 ~ 0 \,)
\qquad \mbox{and} \qquad
\jac H \cdot v = \left( \begin{smallmatrix}
x_1 \\ x_2 \\ x_3 \\ 0 \end{smallmatrix} \right)
$$
for some nonzero $c \in K$, and a $v \in K^4$ of which
the first $3$ coordinates are not all zero. And suppose that the last column of
$\jac H$ does not generate a nonzero constant vector. Then there are $S, T \in \GL_4(K)$, such that
$$SH(Tx)=\big(x_1 x_3 + c x_2 x_4, x_2 x_3 - x_1 x_4, \tfrac12 x_3^2 + \tfrac{c}2 x_4^2, \tfrac12 x_1^2 + \tfrac{c}2 x_2^2\big).$$
\end{lemma}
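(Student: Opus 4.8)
The plan is to reconstruct $H$ explicitly from the two displayed constraints and then normalize. Write $H = (H_1,H_2,H_3,H_4)$ with $H_4 = \tfrac12 x_1^2 + \tfrac{c}2 x_2^2$ (the unique quadratic homogeneous polynomial, up to nothing, with the prescribed gradient, since $\chr K \ne 2$). The condition $(\jac H)\cdot v = (x_1,x_2,x_3,0)\tp$ together with homogeneity gives $(\jac H)|_{x=v}\cdot x = (x_1,x_2,x_3,0)\tp$, i.e.\ the constant matrix $\tilde M^{(1)} := (\jac H)(v)$ equals $\operatorname{diag}(1,1,1,0)$; in particular $\jac H_4$ evaluated at $v$ is the third... fourth row, which is zero, consistent with $H_4(v)$-gradient $= (v_1, c v_2, 0,0)$ being forced to vanish, so $v_1 = v_2 = 0$ and hence $v_3 \ne 0$ (the first three coordinates are not all zero). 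After scaling $v$ we may take $v = (0,0,1,v_4)$ for some $v_4 \in K$.

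Next I would use the row-$4$ relations already built into the proof of Theorem \ref{rkr}: from $CB = 0$ and $D = 0$ (applied with $r = 3$, the bottom row being $\jac H_4$) one gets $CA\,v' = 2(H_4)$ where $v' = (v_1,v_2,v_3) = (0,0,1)$, so column $3$ of $(\jac H_1,\jac H_2,\jac H_3)$-stacked — call it $C$, the $1\times 3$ bottom block reorganized — actually I would instead argue directly: evaluating $\jac H_i$ at $v$ for $i=1,2,3$ gives the rows $e_1,e_2,e_3$ of $\tilde M^{(1)}$, which pins down three linear combinations of the six quadratic coefficients of each $H_i$. Combined with the hypothesis that the last column of $\jac H$ generates no nonzero constant vector (so $B$, the last column restricted to rows $1,2,3$, has no constant vector in its column span; since $\rk B \le 1$ here and $B \ne 0$ would force such a vector via $D=0$, we actually get more — but the honest route is to expand $H_i = \sum_{j\le k} a^{(i)}_{jk} x_j x_k$ and solve the linear system). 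This is the computational heart of the argument: a finite linear algebra problem in the $3 \times 6 = 18$ unknown coefficients of $H_1,H_2,H_3$, constrained by the nine equations from $(\jac H)(v) = \operatorname{diag}(1,1,1,0)$, the relation $CB=0$, and the no-constant-vector condition, with the solution space turning out to be exactly the orbit claimed.

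Once the normal form
$$
H = \big(x_1 x_3 + c x_2 x_4,\; x_2 x_3 - x_1 x_4,\; \tfrac12 x_3^2 + \tfrac{c}2 x_4^2,\; \tfrac12 x_1^2 + \tfrac{c}2 x_2^2\big)
$$
is reached up to $S, T \in \GL_4(K)$, there is nothing left, since $SH(Tx)$ with $S = T = I$ is already the target; along the way the only freedom consumed is: an $S$ to fix the rows of $\tilde M^{(1)}$ to $e_1,e_2,e_3$ (available from Lemma \ref{Irlem}), and a $T$ acting on $x_3,x_4$ to move $v = (0,0,1,v_4)$ to $(0,0,1,0)$ and to absorb residual parameters, checking that the shape of $H_4$ and the gradient conditions are preserved. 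The main obstacle I anticipate is bookkeeping: keeping track of exactly how much of $\GL_4(K) \times \GL_4(K)$ has been spent and verifying that the remaining parameter is genuinely only the scalar $c$ (not, say, a second independent scalar that a sloppy normalization would leave behind) — i.e.\ showing the family is rigid except for $c$, which is forced by the relation $H_1^2 + c H_2^2 - 4 H_3 H_4 = 0$ already noted in Corollary \ref{rktrdeg} and by $\jac H_4 = (x_1,\,cx_2,\,0,\,0)$ fixing the coefficient in front.
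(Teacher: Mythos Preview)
Your opening moves match the paper: deducing $v_1=v_2=0$, $v_3\neq 0$, then normalizing so that $(\jac\tilde H)(e_3)=\diag(1,1,1,0)\cdot x$ and $\tilde H_4=\tfrac12 x_1^2+\tfrac{c}{2}x_2^2$. The gap is in what you call ``the computational heart''. You claim that the nine linear equations from $(\jac H)(v)=\diag(1,1,1,0)$, the relation $CB=0$, and the no-constant-vector hypothesis together cut the $18$ unknown coefficients of $H_1,H_2,H_3$ down to the single orbit. They do not. After all those constraints and after spending the remaining $\GL_4\times\GL_4$ freedom exactly as the paper does, one is left with a genuine $7$-parameter family: coefficients $a_1,a_2,a_3,b_1,b_2,b_3$ coming from terms $a_ix_1x_2+\tfrac12 b_ix_2^2$ in $\tilde H_i$ ($i\le 3$), plus an independent scalar $\tilde c$ in $\tilde H_3=\tfrac12 x_3^2+\tfrac{\tilde c}{2}x_4^2$. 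Every member of this family satisfies all of the hypotheses you list, so your linear system is underdetermined.

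What actually kills these seven parameters in the paper's proof is the condition $\det\jac\tilde H=0$, i.e.\ $\rk\jac H\le 3$, which is an ambient hypothesis from Theorem~\ref{rk3} (the paper's proof writes ``By assumption, $\det\jac H=0$''). The argument then proceeds monomial by monomial in the quartic form $\det\jac\tilde H$: the coefficient of $x_1^4$ forces $a_3=0$; those of $x_1^3x_2$ and $x_1^3x_3$ force $b_3=a_1=0$; that of $x_2^3x_3$ forces $a_2=0$; that of $x_1^2x_3x_4$ forces $\tilde c=c$; and those of $x_1x_2^2x_3$ and $x_1^2x_2x_3$ force $b_2=b_1=0$. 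None of this is visible from your linear setup, and the relation $H_1^2+cH_2^2-4H_3H_4=0$ that you cite at the end is a \emph{consequence} of the normal form, not a tool available to derive it. Without invoking the rank condition your argument cannot close.
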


\begin{proof} Noticing that $\frac{\partial H_4}{\partial x_1}=x_1$, we have $\chr K\neq 2$. Since the last row of $\jac H$ is $(\, x_1 ~ c x_2  ~ 0 ~ 0 \,)$
and the last coordinate of $\jac \tilde{H} \cdot v$ is zero, we deduce that
$v_1 = v_2 = 0$. As the first $3$ coordinates of $v$ are not all zero,
we have $v_3 \ne 0$.

Let $S=\diag(v_3,v_3,v_3,1)$, $T=(e_1,e_2,v_3^{-1}v,ke_4)$ where $k$ is any nonzero constant, and let $\tilde{H}=SH(Tx)$.
Then
\begin{align*}
(\jac \tilde{H}) \cdot e_3
&= S\, (\jac H)|_{x=Tx} \cdot T e_3
 = \big(S\,(\jac H) \cdot v_3^{-1} v\big)\big|_{x=Tx} \\
&= v_3^{-1} S\, \left.
  \left( \begin{smallmatrix} x_1 \\ x_2 \\ x_3 \\ 0 \end{smallmatrix} \right)
  \right|_{x=Tx}
 = \left( \begin{smallmatrix} x_1 \\ x_2 \\ x_3 \\ 0 \end{smallmatrix} \right)
\end{align*}
and
$
\tilde{H}_4 = H_4(Tx) = H_4.
$
Write $\jac \tilde{H} = M^{(1)} x_3 + M^{(2)} x_2 +  M^{(3)} x_1+M^{(4)} x_4.$
Then $$\left(\begin{smallmatrix} x_1 \\ x_2 \\ x_3 \\ 0 \end{smallmatrix} \right)=\jac \tilde{H}\cdot e_3=(\jac \tilde{H})|_{x=e_3}\cdot x=M^{(1)}\cdot x$$ and thus
$M^{(1)}=\diag(1,1,1,0)$.
It follows that
$\jac \tilde{H}$ is as $\tilde{M}$ in Lemma \ref{Irlem} with
$L_1 = x_3$, $L_2 = x_2$, $L_3 = x_1$ and $L_4 = x_4$. Write $\tilde{M} = \left( \begin{array}{cc} A & B \\ C & D \end{array} \right)$ as in \eqref{ABCD}. Then $C=(x_1,cx_2,0)$.

Just like the last column of $\jac H$, the last column of $\jac \tilde{H}$ does
not generate a nonzero constant vector. So $B_{11}$ and $B_{21}$ are not both
zero. Then by $CB=0$ we deduce that $B=(cx_2,-x_1,B_{31})\tp$ up to a scalar, and the scalar can be chosen to be 1 by adapting the value of $k$ in $T$.

The coefficient of $x_3$ in $B_{31}$ is zero, and by changing the third row
of $I_4$ on the left of the diagonal in a proper way, we can get an
$U \in \GL_4(K)$ such that
$$
U \binom{B}{D} = \left( \begin{smallmatrix}
c x_2 \\ - x_1 \\ \tilde{c} x_4 \\ 0
\end{smallmatrix} \right)
$$
for some $\tilde{c} \in K$. Since $U^{-1}$ can be obtained
by changing the third row of $I_4$ on the left of the diagonal
in a proper way as well, we infer that
\begin{align*}
\jac \big(U\tilde{H} (U^{-1}x)\big) \cdot e_4
&= U\, (\jac \tilde{H})|_{x = U^{-1}x} \,U^{-1} \cdot e_4
 = U\, (\jac \tilde{H})|_{x = U^{-1}x} \cdot e_4 \\
&= U\binom{B}{D}\bigg|_{x = U^{-1}x}=U\binom{B}{D}= \left( \begin{smallmatrix}
c x_2 \\ - x_1 \\ \tilde{c} x_4 \\ 0
\end{smallmatrix} \right).
\end{align*}
Similarly one may verify $\jac \big(U\tilde{H} (U^{-1}x)\big) \cdot e_3
 = \left( \begin{smallmatrix} x_1 \\ x_2 \\ x_3 \\ 0 \end{smallmatrix} \right)$ and
$
U_4\, \tilde{H} (U^{-1}x) = \tilde{H}_4 (U^{-1}x) = \tilde{H}_4.
$
So $\jac\big(U^{-1} \tilde{H}(Ux)\big)$ is of the form $$\left( \begin{array}{cccc}
A_{11} & A_{12} & x_1 & c x_2 \\
A_{21} & A_{22} & x_2 & -x_1 \\
A_{31} & A_{32} & x_3 & \tilde{c}x_4 \\
x_1 & c x_2 & 0 & 0
\end{array} \right)
$$
where $c,\tilde{c} \in K$, such that $c \neq 0$.

By row operations using $C_{11}=x_1$ as a pivot,  we may also assume that the coefficients of
$x_1$ in $A_{11}$, $A_{21}$, $A_{31}$ equal to zero.

Replacing $\tilde{H}$ by $U^{-1} \tilde{H}(Ux)$. Let $a_i$ and $b_i$ be the coefficients of $x_1$ and $x_2$ in
$A_{i2}$ respectively, for each $i \le 3$. Then
\begin{gather*}
\tilde{H} = \left( \begin{array}{c}
a_1 x_1 x_2 + \frac12 b_1 x_2^2 + x_1 x_3 + c x_2 x_4 \\
a_2 x_1 x_2 + \frac12 b_2 x_2^2 + x_2 x_3 - x_1 x_4 \\
a_3 x_1 x_2 + \frac12 b_3 x_2^2 + \frac12 x_3^2 + \frac{\tilde{c}}2 x_4^2 \\
\frac12 x_1^2 + \frac{c}2 x_2^2
\end{array} \right)
\qquad \mbox{and} \\
\jac \tilde{H} = \left( \begin{array}{cccc}
a_1 x_2 + x_3 & a_1 x_1 + b_1 x_2 + c x_4 & x_1 & c x_2 \\
a_2 x_2 - x_4 & a_2 x_1 + b_2 x_2 + x_3 & x_2 & -x_1 \\
a_3 x_2 & a_3 x_1 + b_3 x_2 & x_3 & \tilde{c} x_4 \\
x_1 & c x_2 & 0 & 0
\end{array} \right).
\end{gather*}
Consequently, it suffices to show that $a_i = b_i = 0$ for each $i \le 3$,
and that $\tilde{c} = c$.

By assumption, $\det \jac H=0$.
Observing the coefficient of $x_1^4$ in $\det \jac \tilde{H}$ by expanding $\jac H$ along rows
4, 3, 2, 1, in that order, we see that
$a_3=0$. Hence the third row of $\jac \tilde{H}$ reads
$
\jac \tilde{H}_3 = (\, 0 ~ b_3 x_2 ~ x_3 ~ \tilde{c} x_4 \,).
$
Since the coefficients of $x_1^3 x_2$ and $x_1^3 x_3$ in $\det \jac \tilde{H}$
are zero, we see by expanding along rows $3$, $4$, $1$, in that order, that
$b_3 x_2 = a_1 x_1 = 0$. Hence the third row of $\jac \tilde{H}$ reads
$$
\jac \tilde{H}_3 = (\, 0 ~ 0 ~ x_3 ~ \tilde{c} x_4 \,).
$$
Since the coefficient of $x_2^3 x_3$ in $\det \jac \tilde{H}$
is zero, we see by expanding along rows $3$, $4$, $2$, in that order, that
$a_2 x_2 = 0$. So
$$
\jac \tilde{H} = \left( \begin{array}{cccc}
x_3 & b_1 x_2 + c x_4 & x_1 & c x_2 \\
- x_4 & b_2 x_2 + x_3 & x_2 & -x_1 \\
0 & 0 & x_3 & \tilde{c} x_4 \\
x_1 & c x_2 & 0 & 0
\end{array} \right).
$$
Since the coefficient of $x_1^2 x_3 x_4$ in $\det \jac \tilde{H}$
is zero, we see by expending along row $3$, and columns $2$ and $1$,
in that order, that $\tilde{c} x_4 = c x_4$.

Since the coefficient of $x_1 x_2^2 x_3$ in $\det \jac \tilde{H}$
is zero, we see by expending along row $3$, and columns $1$, $4$, in that order,
that $b_2 x_2 = 0$. Using that and that the coefficient of $x_1^2 x_2 x_3$ in
$\det \jac \tilde{H}$ is zero, we see by expending along row $3$, and columns
$1$, $2$, in that order, that $b_1 x_2 = 0$.

In conclusion, $a_i = b_i = 0$ for each $i \le 3$,
and $\tilde{c} = c$, and thus $\tilde{H}$ is as claimed.
\end{proof}

Now we can prove Theorem \ref{rk3}.

\begin{proof}[Proof of Theorem \ref{rk3}]
From Lemma \ref{F2lem} below, it follows that we may assume that $K$ has at least
$3$ elements. Hence we may assume that $\tilde{M}:= \jac \tilde{H}$ is as in Lemma
\ref{Irlem} and write
$\tilde{M} = \left( \begin{array}{cc} A & B \\ C & D \end{array} \right)$ as in \eqref{ABCD}.
We distinguish three cases:
\begin{itemize}

\item \emph{The column space of $B$ contains a nonzero constant vector.}

Then there exists an $U \in \GL_m(K)$, such that the column space of
$U \tilde{M}$ contains $e_1$. So the matrix which consists of the last
$m-1$ rows of $\jac (U \tilde{H}) = U \tilde{M}$ has rank $2$.
Let  $\tilde{U}$ be the matrix consisting of the last $m-1$ rows of $U$.
Then $\rk \jac (\tilde{U} \tilde{H}) = 2$, and we apply Theorem
\ref{rkr} to $\tilde{U} \tilde{H}$.
\begin{compactitem}

\item If case (1) of Theorem \ref{rkr} applies for $\tilde{U} \tilde{H}$,
then we may assume that only the first $\tfrac12\cdot2^2 - \tfrac12\cdot 2 + 1 = 2$
rows of $\tilde{U} \tilde{H}$ may be nonzero, and thus only the first $3$ rows of
$U\tilde{H}$ may be nonzero. So case (1) of Theorem \ref{rk3} follows.

\item If case (2) of Theorem \ref{rkr} applies for $\tilde{U} \tilde{H}$,
then $\chr K\neq 2$ and only the first $2$ columns of $\jac (\tilde{U} \tilde{H})$
are nonzero, and thus case (1) or case (2) of Theorem \ref{rk3} follows.

\item If case (3) of Theorem \ref{rkr} applies for $\tilde{U} \tilde{H}$,
then $\chr K= 2$ and only the first $3$ columns of $\jac (\tilde{U} \tilde{H})$
are nonzero, and thus case (1) or case (3) of Theorem \ref{rk3} follows.

\end{compactitem}

\item \emph{The columns of $B$ are dependent over $K$ in pairs.}

We may assume that $C\neq 0, B\neq 0$ and $\chr K\neq 2$. (In fact, if $C = 0$, then (1) of Theorem \ref{rk3}
follows; if $B = 0$, then $\tilde{H}$ is as in (2) of Theorem \ref{rkr}, which is
(5) of Theorem \ref{rk3}; if $\chr K = 2$, then $\tilde{H}$ is as in (3) of Theorem \ref{rkr}, which
is (6) of Theorem \ref{rk3}.)

Since $\chr K\neq 2$, it follows from Lemma \ref{Ccoldep1} that the columns of $C$ are
dependent over $K$, and thus $\rk C \leq 2$. Notice that $\rk B=1$. If $\rk C = 2$, then
$\rk B + \rk C = 3$. By  Lemma \ref{BCrkr}, $B$ contains a nonzero constant
vector, and thus (1) of Theorem \ref{rk3}
follows.

So $\rk C = 1$. By
Theorem \ref{rkr}, we may assume that only the first row of $C$ is nonzero. We may also assume that only the first column of $B$ is nonzero. By Lemma \ref{Ccoldep2} the
columns of $C$ are dependent over $K$.

By coordinate change, we may assume that $\tilde{H}_{4}=a_1x_1^2+\frac{c}{2}x_2^2+a_3x_3^2$, and we may assume that
$a_1=\frac12$ and $a_3=0$, since $C\neq 0$ and the columns of $C$ are dependent over $K$.

Then the first row of $C$ is $(\, x_1 ~ c x_2  ~ 0 \,)$.
We distinguish two cases.
\begin{compactitem}

\item $c = 0$.

Noticing that the first column of $\jac \tilde{H}$ is independent of the other columns
of $\jac \tilde{H}$, and
$$
\jac (\tilde{H}|_{x_1=1}) = \big(\jac \tilde{H})|_{x_1=1} \cdot (\jac (1,x_2,x_3,\ldots,x_m)\big),
$$
we infer that $\rk \jac (\tilde{H}|_{x_1=1}) = 2$, and we may apply \cite[Theorem 2.3]{bondt17}.
\begin{compactitem}

\item In the case of \cite[Theorem 2.3]{bondt17} (1), case (2) of Theorem
\ref{rkr} follows, which yields (5) of Theorem \ref{rk3}.

\item In the case of \cite[Theorem 2.3]{bondt17} (2), case (1) of Theorem
\ref{rk3} follows.

\item In the case of \cite[Theorem 2.3]{bondt17} (3), case (1) or case (2)
of Theorem \ref{rk3} follows.

\item Case (4) of \cite[Theorem 2.3]{bondt17} cannot occur.

\end{compactitem}

\item $c \ne 0$.

By Lemma \ref{Ccoldep1}, there exists a $v \in K^n$ of which the first $3$
coordinates are not all zero, such that
$
\jac \tilde{H} \cdot v = (x_1,x_2,x_3,0,\ldots,0)\tp.$ Notice that $\jac \tilde{H}_{4}=(x_1, c x_2,   0, 0,\ldots,0\,)$ and the column space of $B$ does not contain a nonzero constant vector.  We deduce from Lemma \ref{rk3calc} that  case (4) of Theorem \ref{rk3} follows.

\end{compactitem}

\item \emph{None of the above.}

We first show that $\rk B \ge 2$. So assume that $\rk B \le 1$.
Since the columns of $B$ are not dependent over $K$ in pairs, we deduce from \cite[Theorem 2.1]{bondt17} that the rows of
$B$ are dependent over $K$ in pairs. This contradicts the fact that the column
space of $B$ does not contain a nonzero constant vector.
So $\rk B \ge 2$ indeed.

From $CB=0$, we have $\rk B + \rk C \leq 3$, and thus $\rk C \le 1$. If $C = 0$, then (1)
of Theorem \ref{rk3} follows. If $\rk C = 1$, then
$\rk B =2$ and $\rk B + \rk C = 3$. From Lemmas \ref{Ccoldep2} and \ref{BCrkr}, we deduce
that the column space of $B$ contains a nonzero constant vector,  a
contradiction.
\end{itemize}

So it remains to prove the last claim. This is trivial in case of (1) of Theorem \ref{rk3}.
In case of (4) of Theorem \ref{rk3}, the last claim follows from the fact that
$\tilde{H}_1^2 + c \tilde{H}_2^2 - 4 \tilde{H}_3 \tilde{H}_4 = 0$. In all other cases,
the last claim follows from Lemma \ref{23} or the last claim of Theorem \ref{rkr}.
\end{proof}

\begin{lemma} \label{F2lem}
Let $K$ be a field of characteristic $2$ and $L$ be an
extension field of $K$. If Theorem \ref{rk3} holds for $L$, then it holds for $K$.
\end{lemma}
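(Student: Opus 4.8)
The plan is to imitate the descent of Lemma~\ref{Fqlem}, using repeatedly the elementary fact exploited there: if $W_0$ is a subspace of a finite-dimensional $K$-vector space $V$, then $\dim_K W_0 = \dim_L(W_0\otimes_K L)$, so any subspace of $V\otimes_K L$ that is defined over $K$ has the same dimension over $K$ as over $L$. Since $\chr K = \chr L = 2$, cases (2), (4) and (5) of Theorem~\ref{rk3} cannot occur over $L$, so over $L$ the map $H$ falls under case (1), (3), or (6).

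Cases (1) and (6) descend at once. If, over $L$, only the first three rows of $\jac(SH(Tx))$ are nonzero, then the $L$-span of the rows of $\jac H$ has dimension $\le 3$; these rows lie in the $K$-space of $n$-tuples of linear forms, so their $K$-span also has dimension $\le3$, and some $S'\in\GL_m(K)$ puts $H$ in case (1) over $K$. Dually, case (6) over $L$ says that $\jac H$ has $n-4$ linearly independent constant vectors in its kernel over $L$; the constant kernel $\{v\in K^n:(\jac H)v=0\}$ is the kernel of a $K$-linear map, so it has the same dimension over $K$, and some $T'\in\GL_n(K)$ puts $H$ in case (6) over $K$. Hence we may assume $H$ is, over $L$, in case (3) but in neither case (1) nor case (6); the row-span argument then forces the $K$-span of the rows of $\jac H$ to be exactly $4$-dimensional, so after some $S'\in\GL_m(K)$ we reduce to $H\in K[x]^4$ with $\rk\jac H=3$ and $\jac H_1,\ldots,\jac H_4$ linearly independent over $K$.

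What remains --- the crux --- is to realize case (3) over $K$. In characteristic $2$ the Hessian $\hess H_i$ is symmetric with zero diagonal, $q\mapsto\hess q$ is a $K$-linear bijection from quadratic forms modulo squares onto such matrices, and $\jac q$ depends only on $\hess q$; so I work with the $4$-dimensional $K$-space $\mathcal M:=\langle\hess H_1,\ldots,\hess H_4\rangle_K$ of symmetric zero-diagonal $n\times n$ matrices. Case (3) over $L$ says that, after the transformation, $\mathcal M\otimes_K L$ is spanned by $\hess\tilde H_1$ together with the three ``triangle'' matrices $E_{12}+E_{21}$, $E_{13}+E_{31}$, $E_{23}+E_{32}$ (the Hessians of $x_1x_2$, $x_1x_3$, $x_2x_3$), and that $\hess\tilde H_1$ lies outside their span --- otherwise $\jac\tilde H$ would involve only $x_1,x_2,x_3$, i.e.\ case (6). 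Let $\mathcal M_3$ be the span of the triangle matrices; its elements share the $(n-3)$-dimensional kernel $U:=\{v:v_1=v_2=v_3=0\}$, and I claim $\mathcal M_3$ is the \emph{unique} $3$-dimensional subspace of $\mathcal M\otimes_K L$ whose elements have a common kernel of dimension $\ge n-3$. Indeed, any other $3$-dimensional subspace $\mathcal N$ meets $\mathcal M_3$ in a $2$-plane, spanned by two independent triangle matrices whose common kernel is already $U$; choosing $N_0\in\mathcal N\setminus\mathcal M_3$, its triangle component kills $U$ while its nonzero $\hess\tilde H_1$-component does not kill all of $U$ --- precisely because we are not in case (6) --- so the common kernel of $\mathcal N$ is contained in $U\cap\ker\hess\tilde H_1$, which has dimension $<n-3$.

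Granting the claim, the uniqueness of $\mathcal M_3$ (valid over $\overline K=\overline L$ by the same argument) makes it stable under all automorphisms of $\overline K$ over $K$, hence defined over $K$: there is a $K$-rational $3$-dimensional $(\mathcal M_3)_K\subseteq\mathcal M$ whose common kernel $V\subseteq K^n$ is $K$-rational of dimension $n-3$. Passing to $K$-coordinates adapted to a complement of $V$ (a $T'\in\GL_n(K)$) turns $(\mathcal M_3)_K$ into the space of all symmetric zero-diagonal matrices whose entries outside the leading $3\times3$ block vanish, i.e.\ the span of the three triangle matrices; a $K$-row operation $S'\in\GL_4(K)$ then makes $\hess H_2,\hess H_3,\hess H_4$ equal to $E_{12}+E_{21},E_{13}+E_{31},E_{23}+E_{32}$, while $\hess H_1$ provides the remaining, independent, direction. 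So $H_2,H_3,H_4$ become $x_1x_2,x_1x_3,x_2x_3$ up to square parts, which are invisible to $\jac$ in characteristic $2$; hence $\jac(\tilde H_1,\tilde H_2,\tilde H_3,\tilde H_4)=\jac(\tilde H_1,x_1x_2,x_1x_3,x_2x_3)$ over $K$ with only the first four rows nonzero --- case (3). I expect the genuine work to be confined to the uniqueness argument above and to the routine verification that the square parts cause no interference.
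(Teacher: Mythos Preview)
Your approach differs substantially from the paper's, and the heart of it --- the uniqueness of the ``triangle'' subspace $\mathcal M_3$ inside $\mathcal M\otimes_K L$ --- is correct and nicely argued (the check that any two independent elements of $\mathcal M_3$ already have common kernel exactly $U$ goes through, and the step ruling out $N_0\in\mathcal N\setminus\mathcal M_3$ uses precisely that $\dim\mathcal M=4$, which your reduction guarantees). The descent step, however, does not go through as written. You claim that uniqueness over $\bar K=\bar L$ plus stability under $\operatorname{Aut}(\bar K/K)$ forces $\mathcal M_3$ to be defined over $K$. But $\bar K=\bar L$ only when $L/K$ is algebraic, which the lemma does not assume; and even granting that, in characteristic~$2$ the fixed field of $\operatorname{Aut}(\bar K/K)$ is the perfect closure $K^{\mathrm{perf}}$, not $K$ itself. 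Galois descent therefore only yields $\mathcal M_3$ over $K^{\mathrm{perf}}$. Your argument thus proves the lemma when $K$ is perfect --- in particular for $K=\mathbb F_2$, which is the only case the paper actually invokes when it writes ``we may assume that $K$ has at least $3$ elements'' --- but not for arbitrary $K$ of characteristic~$2$ as the lemma is stated.

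The paper bypasses descent entirely. After the same reduction to $m=4$ with independent rows, it notes that the one-dimensional left kernel of $\jac H$ over $K(x)$ is spanned by some $v\in K(x)^4$, while over $L(x)$ this kernel is spanned by $(0,x_3,x_2,x_1)\,S$. Hence $v$ is an $L(x)$-multiple of $(0,x_3,x_2,x_1)\,S$, so $vS^{-1}$ has first entry zero; this is a nontrivial $L$-linear relation among $v_1,\dots,v_4\in K(x)$, hence a nontrivial $K$-linear relation, which exhibits a nonzero constant vector in the column space of $\jac H$ over $K$. One then feeds back into the ``column space contains a nonzero constant vector'' branch of the proof of Theorem~\ref{rk3}, rather than landing directly in case~(3). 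This route is shorter and completely insensitive to inseparability. Your Hessian-space picture is more structural and, were the descent repaired, would place $H$ directly in case~(3); but the repair is not the routine step you anticipate when $K$ is imperfect.
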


\begin{proof}
Suppose that $H\in K[x]^m$ satisfies Theorem \ref{rk3} over $L$, i.e., there exist $S\in \GL_m(K)$ and $T\in \GL_n(K)$ such that $\tilde{H}:=SH(Tx)$ is of the form (1), (3) or (6)
in Theorem \ref{rk3}. We assume that $\tilde{H}$ is of the form (3) because the other cases follows in a similar manner
as Lemma \ref{Fqlem}.

Notice that
$$
\big(\,0 ~ x_3 ~ x_2 ~ x_1 ~ y_4 ~ y_5 ~ \cdots ~ y_m\,\big) \cdot
\jac \tilde{H} = 0.
$$
Since $\jac \tilde{H} = S(\jac H)|_{Tx} T$, one may verify that
$$
\big(\,0 ~ x_3 ~ x_2 ~ x_1 ~ y_4 ~ y_5 ~ \cdots ~ y_m\,\big) \cdot S \cdot \jac H = 0.
$$

Suppose first that $m = 4$. As $\rk \jac H = m-1$, there exists a nonzero
$v \in K(x)^m$ such that $\ker \big(\,v_1 ~ v_2 ~ v_3 ~ v_4\,\big)$ is equal
to the column space of $\jac H$. Since the column space of $\jac H$ is contained
in $\ker \big((\,0~x_3~x_2~x_1\,)S\big)$, it follows that
$\big(\, v_1 ~ v_2 ~ v_3 ~ v_4 \,\big)$ is dependent on $(\,0~x_3~x_2~x_1\,) S$.
So
$$
v_1 (S^{-1})_{11} + v_2 (S^{-1})_{21} + v_3 (S^{-1})_{31} + v_4 (S^{-1})_{41} = 0
$$
and the components of $v$ are dependent over $L$. Consequently, the
components of $v$ are dependent over $K$. So $\ker \big(\,v_1 ~ v_2 ~ v_3 ~ v_4\,)$
contains a nonzero vector over $K$, and so does the column space of $\jac H$.
Now we can follow the same argumentation as in the first case in the proof
of Theorem \ref{rk3}.

Suppose next that $m > 4$. Then the rows of $\jac H$ are dependent over
$L$ and thus dependent over $K$ as well. So we
may assume that the last row of $\jac H$ is zero. By induction on $m$,
$(H_1,H_2,\ldots,H_{m-1})$ is as $H$ in Theorem \ref{rk3}. As $H_m = 0$,
we conclude that $H$ satisfies Theorem \ref{rk3} over $K$.
\end{proof}

\section{Keller maps $x+H$ with $H$ quadratic homogeneous and $\rk \jac  H=3$}

In this section, we classify all Keller maps $x+H$ over an arbitrary field $K$ where $H$ is quadratic homogeneous and $\rk \jac H\leq 3$. Notice that for any homogeneous polynomial map $H\in K[x]^n$, $\det \jac H\in K^*$ if and only if $\jac H$ is nilpotent (cf. \cite[Lemma 6.2.11]{essen2000}).

Recall that a polynomial map $F=x+H\in K[x]^n$ is called triangular if $H_n\in K$ and $H_i\in
K[x_{i+1},\ldots,x_n],$ $1\leq i \leq n-1$. A
polynomial map $F$ is called linearly triangularizable if it is
linearly conjugate to a triangular map, i.e., there exists a $T\in \GL_n(K)$ such that $T^{-1}F(Tx)$ is triangular. A
linearly triangularizable map is a tame automorphism.

\begin{lemma} \label{lem3}
Let $H \in K[x]^3$ be quadratic homogeneous, such that
$\jac_{x_1,x_2,x_3} H$ is nilpotent.
Then $\jac_{x_1,x_2,x_3} H$ is similar over $K$ to a triangular matrix or to
a matrix of the form
$$
\left( \begin{array}{ccc}
0 & f & 0 \\ b & 0 & f \\ 0 & -b & 0
\end{array} \right)
$$
where $f$ and $b$ are independent linear forms in $K[x_4,x_5,\ldots,x_n]$.
\end{lemma}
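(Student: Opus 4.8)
The plan is to analyze the entries of $M := \jac_{x_1,x_2,x_3} H$ directly. Since $H$ is quadratic homogeneous, each entry $M_{ij} = \partial H_i/\partial x_j$ is a linear form in $x_1,\dots,x_n$, and since $M$ is nilpotent, $\tr M = 0$, $\tr M^2 = 0$, and $\det M = 0$, where $\det M$ is a cubic form in the $x_k$. First I would observe that the symmetry coming from the fact that $M$ is a Jacobian in the variables $x_1,x_2,x_3$ forces a constraint: writing $M = \sum_{k} M^{(k)} x_k$ with $M^{(k)} \in \Mat_3(K)$, the part $\sum_{k\le 3} M^{(k)} x_k$ has each $M^{(k)}$ ($k\le 3$) symmetric in a suitable sense (the Hessian-type symmetry $\partial M_{ij}/\partial x_k = \partial M_{kj}/\partial x_i$ for $i,k\le 3$), whereas the coefficients of $x_4,\dots,x_n$ are unconstrained. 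The key reduction is to split off the part of $M$ that is linear in $x_1,x_2,x_3$ versus the part linear in $x_4,\dots,x_n$; after a linear change among $x_1,x_2,x_3$ and a simultaneous conjugation of $M$ (which is allowed since conjugating $M$ by $T\in\GL_3(K)$ and changing variables $(x_1,x_2,x_3)\mapsto T(x_1,x_2,x_3)$ together preserve the property of being $\jac_{x_1,x_2,x_3}$ of a quadratic homogeneous map), I would aim to bring $M$ into a normal form.

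The main case division I anticipate is on $\rk M$ over $K(x)$, or more precisely on the structure of $M$ modulo the ideal generated by $x_1,x_2,x_3$. If $M$ has all entries in $K[x_4,\dots,x_n]$ (i.e. $M^{(k)}=0$ for $k\le 3$), then $M$ is a nilpotent matrix whose entries are linear forms in a set of variables on which $M$ does not depend, so $M(v)$ is a nilpotent constant matrix for every $v$, and in fact the whole pencil is nilpotent; a standard argument (simultaneous triangularization of a nilpotent matrix pencil, or Gerstenhaber-type reasoning in the $3\times 3$ case) shows $M$ is either similar over $K$ to a strictly triangular matrix or to the displayed antisymmetric-like form — indeed a $3\times 3$ traceless matrix with $\tr M^2 = 0$ and $\det M = 0$ whose entries are linear in independent variables is, up to conjugation, either strictly upper triangular or of the exhibited shape $\left(\begin{smallmatrix} 0 & f & 0 \\ b & 0 & f \\ 0 & -b & 0\end{smallmatrix}\right)$, which one verifies has characteristic polynomial $\lambda^3$. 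If on the other hand some $M^{(k)}$ with $k\le 3$ is nonzero, I would use the symmetry of that part together with the nilpotency constraints $\tr M = \tr M^2 = \det M = 0$ (expanded as polynomial identities in $x_1,\dots,x_n$, collecting coefficients of monomials) to force $M$ into strictly triangular form after conjugation; here Proposition \ref{evenrk} on symmetric matrices with zero diagonal (when $\chr K = 2$) and diagonalization of symmetric matrices (when $\chr K \ne 2$) handle the self-dependent part, and then nilpotency kills the cross terms.

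The hard part will be the bookkeeping in the second case: showing that once a "self-variable" ($x_1,x_2,x_3$) appears in $M$, the nilpotency forces triangularizability and rules out the exceptional antisymmetric form. I expect this to come down to extracting, from $\det M = 0$, the coefficients of monomials of the form $x_i x_j x_k$ with at least one index $\le 3$, and from $\tr M^2 = 0$ the coefficients of $x_i x_j$; combined with the symmetry relations these should pin down enough entries of $M$ to apply an induction or a direct $3\times 3$ normal-form argument. A secondary subtlety is characteristic $2$: there $\tr M^2 = (\tr M)^2$ carries no extra information, so one must instead use $\det M = 0$ together with the vanishing of the coefficient of $\lambda$ in the characteristic polynomial (the sum of $2\times 2$ principal minors), and invoke Remark \ref{evenrk1} to control the Hessian-symmetric part; I would treat $\chr K = 2$ as a parallel sub-branch rather than trying to unify it with the odd-characteristic argument.
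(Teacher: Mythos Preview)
Your plan takes a genuinely different route from the paper. The paper's argument is short and indirect: assuming $\jac_{x_1,x_2,x_3} H$ is not triangularizable, it picks any $x_i$ actually appearing in the matrix, substitutes $x_i \mapsto x_i + 1$ to obtain a nilpotent matrix $N$ with nonzero constant part $N(0)$, and then invokes \cite[Lemma~3.1]{bondt17} directly to produce the form
\[
\begin{pmatrix} 0 & f+1 & 0 \\ b & 0 & f+1 \\ 0 & -b & 0 \end{pmatrix},
\]
so that the homogeneous part has the displayed shape. Independence of $b$ and $f$ is then free (their $x_i$-coefficients are $0$ and $1$), and the claim that $b,f \in K[x_4,\dots,x_n]$ comes from a three-line Hessian-symmetry check on the conjugated map $\tilde H = T^{-1}H(\hat T x)$: the vanishing of the $x_1x_2$ and $x_2x_3$ coefficients in $\tilde H_2$ kills $x_2$ from $b,f$, and then the same coefficients in $\tilde H_1,\tilde H_3$ kill $x_1,x_3$.

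There is a real gap in your Case~1: the assertion that a $3\times 3$ nilpotent matrix of linear forms is ``by a standard argument'' either strictly triangular or of the special shape is precisely the content of \cite[Lemma~3.1]{bondt17}, and is not a consequence of Gerstenhaber-type results (which concern linear spaces of nilpotent matrices, not a single matrix over $K[x]$). If you intend to reprove that lemma, you should say so; otherwise you are assuming the heart of the result. Your Case~2 plan (direct coefficient extraction from $\tr M = \sigma_2(M) = \det M = 0$ to force triangularizability whenever some $x_k$, $k\le 3$, appears) is plausible but both longer and logically redundant compared to the paper: once the special form is obtained with $b,f$ arbitrary linear forms, the Hessian-symmetry argument immediately shows $b,f$ cannot involve $x_1,x_2,x_3$, so there is no need to treat the self-variable case separately. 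The translation trick $x_i \mapsto x_i+1$ is the key idea you are missing---it converts the homogeneous problem into one where the cited classification applies directly, and avoids your case split entirely.
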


\begin{proof}
Suppose that $\jac_{x_1,x_2,x_3} H$ is not similar over $K$ to a triangular matrix.
Take $i$ such that the coefficient matrix of $x_i$ of $\jac_{x_1,x_2,x_3} H$
is nonzero, and define
$$
N := \jac_{x_1,x_2,x_3} (H|_{x_i=x_i+1}) = (\jac_{x_1,x_2,x_3} H)|_{x_i=x_i+1.}
$$
Then $N$ is nilpotent, and $N$ is not similar over $K$ to a triangular matrix. Since
$N(0)$ is nilpotent, it is similar over $K$ to $E_{13}$ or $E_{12}+E_{23}$. By \cite[Lemma 3.1]{bondt17} $N$ is similar over $K$ to a matrix of the form
$$
\left( \begin{array}{ccc}
0 & f+1 & 0 \\ b & 0 & f+1 \\ 0 & -b & 0
\end{array} \right),
$$
where $b$ and $f$ are linear forms, and $b$ and $f$ are independent because the
coefficients of $x_i$ in $b$ and $f$ are $0$ and $1$ respectively. So there exists a $T\in \GL_3(K)$ such that
$T^{-1}(\jac_{x_1,x_2,x_3} H)T$ is of the form $$
\left( \begin{array}{ccc}
0 & f & 0 \\ b & 0 & f \\ 0 & -b & 0
\end{array} \right),
$$
where $b$ and $f$ are independent linear forms. Let $\hat{T}=\diag(T, I_{n-3})$ and
$\tilde{H}=T^{-1}H(\hat{T}x)$. Then $$\jac_{x_1,x_2,x_3} \tilde{H}=\left.\left( \begin{array}{ccc}
0 & f& 0 \\ b & 0 & f\\ 0 & -b & 0
\end{array} \right)\right|_{x=\hat{T}x.}$$
The coefficients of $x_1 x_2$ and $x_2 x_3$ in $\tilde{H}_2$ are zero, so
$b(\hat{T}x)$ and $f(\hat{T}x)$ do not contain $x_2$. In $\tilde{H}_1$ and
$\tilde{H}_3$, these coefficients are zero as well, so $b(\hat{T}x)$ and $f(\hat{T}x)$
do not contain $x_1,x_2,x_3$, and neither do $b$ and $f$.
\end{proof}

\begin{lemma} \label{lem1}
Let $H \in K[x]^n$ with $\jac H$  nilpotent. Suppose that (i) $\jac H$ may only be nonzero in the first row and the
first $2$ columns (resp.\@ (ii) $\jac H$ may only be nonzero in the first row and the first $3$ columns with $\chr K=2$). Then there exists a $T \in \GL_n(K)$ such that for
$\tilde{H} := T^{-1} H(Tx)$, the following holds.

\begin{enumerate}[(a)]

\item $\jac \tilde{H}$ may only be nonzero in the first row and the
first $2$ (resp.\@ $3$) columns.

\item The Hessian matrix of the leading part with
respect to $x_2,x_3,\ldots,x_n$ of $\tilde{H}_1$ is the product of
a symmetric permutation matrix and a diagonal matrix.

\item Every principal minor of the leading principal submatrix
of size $2$ (resp.\@ $3$) of $\jac \tilde{H}$ is zero.

\end{enumerate}
\end{lemma}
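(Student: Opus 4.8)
The plan is to unpack the shape hypotheses into an explicit form for $H$, extract the consequences of nilpotency as a short list of polynomial identities, normalise the Hessian of the leading part by conjugations that respect the shape, and only then read off the vanishing of the principal minors.

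First I would record what the shape gives: under (i) the row $\jac H_i$ vanishes outside the first two columns for every $i\ge 2$, so each $H_i$ with $i\ge 2$ is a quadratic form in $x_1,x_2$ alone, and under (ii) each such $H_i$ is a quadratic form in $x_1,x_2,x_3$ alone; $H_1$ is unrestricted. Writing $\jac H=\left(\begin{smallmatrix}L&B\\ C&0\end{smallmatrix}\right)$ with $L\in\Mat_r(K[x])$ ($r=2$, resp.\ $3$), with $B$ having nonzero entries only in its first row, and with lower--right block $0$, I would observe that every principal minor of $\jac H$ of size larger than $r+1$ vanishes identically (two of the last rows/columns then force a linear dependence), so that nilpotency of $\jac H$ amounts to the vanishing of the finitely many coefficients $e_1,\ldots,e_{r+1}$ of its characteristic polynomial, i.e.\ of the sums of its principal minors of sizes $1,\ldots,r+1$. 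Already $e_1=\tr L=0$ gives $\partial_1 H_1=-\partial_2 H_2$ (resp.\ $\partial_1 H_1=-\partial_2 H_2-\partial_3 H_3$), a linear form lying in $K[x_1,x_2]$ (resp.\ $K[x_1,x_2,x_3]$); so $H_1$ has no monomial $x_1x_j$ with $j\ge r+1$, and the leading part of $H_1$ with respect to $x_2,\ldots,x_n$, which is $H_1|_{x_1=0}$, together with a handful of monomials involving $x_1$, accounts for all of $H_1$.

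Next I would fix the conjugations available. A conjugation $\tilde H:=T^{-1}H(Tx)$ keeps the shape of (a) when $T=\diag(T_{11},T_{22})$ with $T_{11}$ \emph{upper triangular} on the first $r$ coordinates and $T_{22}\in\GL_{n-r}(K)$ arbitrary, plus a few further elementary moves (adding a multiple of $x_2$ to a later variable, scalings) which one checks also keep the shape. Under $\diag(I_r,T_{22})$ the leading part $H_1|_{x_1=0}$ becomes $H_1(Tx)|_{x_1=0}$, whose Hessian is the old one transformed by the congruence $\diag(I_{r-1},T_{22})$; this fixes the part indexed by $x_2,\ldots,x_r$, acts on the corresponding off--rows as on covectors, and congruence--transforms the block indexed by $x_{r+1},\ldots,x_n$ by $T_{22}\tp(\cdot)T_{22}$. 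I would normalise that block: to a diagonal matrix if $\chr K\ne 2$ (case (i)), by the congruence--diagonalisation of symmetric matrices recalled before Proposition \ref{evenrk}; to a product of a symmetric permutation matrix and a diagonal matrix if $\chr K=2$ (case (ii)), by Proposition \ref{evenrk} (its lower--triangular output, composed with a permutation of the variables, is realised by an admissible $T$). It then remains to remove the surviving cross terms $x_2x_j$; for this I would use the upper--triangular freedom in $T_{11}$ together with the restriction on $H_1$ above, and, where that is not enough, a further nilpotency identity among $L,B,C$, to conclude that the Hessian of the leading part is of the desired form (product of a symmetric permutation matrix and a diagonal matrix). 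In $\chr K=2$ the diagonal of every Hessian vanishes (Remark \ref{evenrk1}), which is exactly what makes a nontrivial permutation block compatible with the remaining cross terms; this yields (b).

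Finally, for (c): $\jac\tilde H$ still has the form $\left(\begin{smallmatrix}\tilde L&\tilde B\\ \tilde C&0\end{smallmatrix}\right)$, the identities $e_1=\cdots=e_{r+1}=0$ still hold, and the normalised Hessian of the leading part constrains $\tilde B,\tilde C$ enough that these identities --- compared coefficient by coefficient on suitable monomials, possibly after one more conjugation among the first $r$ coordinates preserving (a) and (b) --- force the whole diagonal of $\tilde L$ to vanish (via $e_1$) and then $\det\tilde L$ and the remaining $2\times 2$ principal minors of $\tilde L$ in case (ii) to vanish (via $e_2$ and the rest), which is (c). The step I expect to be the main obstacle is the Hessian normalisation: the single congruence that would diagonalise the Hessian of $H_1$ necessarily mixes $x_2$ into $x_{r+1},\ldots,x_n$, destroying the defining feature of shape (a) (that $H_2,\ldots,H_n$ involve only $x_1,\ldots,x_r$), so one is confined to the restricted admissible conjugations, which only congruence--transform one block of the Hessian; the cross terms $x_2x_j$ then have to be cleared, or shown harmless, by a careful argument built on the nilpotency identities rather than on a free change of basis.
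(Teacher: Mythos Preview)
Your proposal has a genuine gap at exactly the point you flag as the ``main obstacle'': you are working with too small a class of shape-preserving conjugations. You restrict to block-diagonal $T=\diag(T_{11},T_{22})$ (with a few ad hoc extras), and then worry that such $T$ only congruence-transform one block of the Hessian of $H_1|_{x_1=0}$ and cannot kill the cross terms $x_2x_j$. But in fact every lower triangular $T\in\GL_n(K)$ with ones on the diagonal and first column equal to $e_1$ preserves the shape in (a): for such $T$ one has $(Tx)_1=x_1$ and $(Tx)_j\in K[x_2,\ldots,x_j]$ for $j\ge 2$, so $H_i(Tx)\in K[x_1,\ldots,x_r]$ for all $i\ge 2$; and since $T^{-1}$ has the same form, $\tilde H_i$ for $i\ge 2$ is a $K$-linear combination of $H_2(Tx),\ldots,H_i(Tx)$, hence again lies in $K[x_1,\ldots,x_r]$. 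Moreover $\tilde H_1=H_1(Tx)$ on the nose, and the Hessian of $\tilde H_1|_{x_1=0}$ with respect to $x_2,\ldots,x_n$ is the full congruence transform of that of $H_1|_{x_1=0}$ by the lower-right $(n-1)\times(n-1)$ block of $T$. So (b) follows in one stroke from Proposition~\ref{evenrk} (or from standard diagonalisation of symmetric forms when $\chr K\ne 2$): the cross terms you were trying to clear by separate arguments are handled by the same congruence, and your anticipated obstacle does not exist.

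Your outline for (c) is also not really a proof; ``compared coefficient by coefficient on suitable monomials'' hides all the work. The paper argues (c) by a concrete case split according to whether $\partial_{x_2}\tilde H_1$ (and, in case (ii), $\partial_{x_3}\tilde H_1$) lies in $K[x_1,\ldots,x_r]$ or not. The point of the normalised Hessian in (b) is precisely that if $\partial_{x_j}\tilde H_1$ with $2\le j\le r$ involves some $x_i$ with $i>r$, then $x_i$ occurs in \emph{no other entry} of $\jac\tilde H$; this lets one read off, from the vanishing of the sums of principal minors of sizes $2$ and $3$, that certain entries of the leading $r\times r$ block $N$ vanish (e.g.\ $\partial_{x_1}\tilde H_2=0$, then $\partial_{x_2}\tilde H_2=0$, etc.). In the remaining subcases one reduces (by setting the extra variables to zero) to $\jac\tilde H$ having all nonzero entries in the first $r$ columns, where $N$ itself is nilpotent and \cite[Theorem~3.2]{bondt17} gives the needed row dependence. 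You should replace the vague last paragraph by this kind of explicit case analysis.
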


\begin{proof}
By Proposition \ref{evenrk}, there exists a lower triangular
$T \in \Mat_{n}(K)$, for which the diagonal elements are all 1 and the first column is $e_1$,
such that the Hessian matrix of the leading part with
respect to $x_2,x_3,\ldots,x_n$ of $\tilde{H}_1 = H_1(Tx)$ is the
product of a symmetric permutation matrix and a diagonal matrix.

Furthermore, $\jac \tilde{H}$ may only be nonzero in the first row
and the first $2$ (resp.\@ $3$) columns
because of the form of $T$. So it remains to show (c). We discuss the two cases respectively.

\begin{enumerate}[(i)]
\item
Let $N$ be the leading principal submatrix of size $2$ of $\jac \tilde{H}$.

Suppose first that $\jac \tilde{H}$ may only be nonzero in the first $2$
columns.  Then $N$ is nilpotent since $\jac \tilde{H}$ is nilpotent.
On account of \cite[Theorem 3.2]{bondt17}, $N$ is similar over $K$
to a triangular matrix.
Hence the rows of $N$ are dependent over $K$.
If the second row of $N$ is zero, then (c) follows. If the second row of
$N$ is not zero, then we may assume that the first row of $N$ is zero, and (c)
follows as well.

Suppose next that $\jac \tilde{H}$  may only be nonzero in the first row and
the first $2$ columns, but not just the first $2$ columns. Then $\frac{\partial}{\partial{x_1}} \tilde{H}_2, \frac{\partial}{\partial{x_2}} \tilde{H}_2 \in K[x_1,x_2]$, and $\parder{}{x_1} \tilde{H}_1 \in
K[x_1,x_2]$ as well since $\tr \jac \tilde{H} = 0$. We distinguish two
cases.
\begin{compactitem}

\item $\parder{}{x_2} \tilde{H}_1 \in K[x_1,x_2]$.

 Let $G := \tilde{H}(x_1,x_2,0,\ldots,0)$. Then
$\jac G = (\jac \tilde{H})|_{x_3=\cdots=x_n=0}$. Consequently,
the nonzero part of $\jac G$ is restricted to the first two columns.

So the leading principal submatrix of size $2$ of $\jac G$
is nilpotent. But this submatrix is just $N$, and just as for
$\tilde{H}$ before, we may assume that only one row of $N$ is nonzero.
This gives (c).

\item $\parder{}{x_2} \tilde{H}_1 \notin K[x_1,x_2]$.

Since $\hess (\tilde{H}_1|_{x_1=0})$ is the product of a permutation matrix and a diagonal matrix,
it follows that $\parder{}{x_2} \tilde{H}_1$ is a linear combination of $x_1$ and $x_i$,
where $i \ge 3$, such that $x_i$ does not occur in any other entry of $\jac \tilde{H}$.
Looking at the coefficient of $x_i^1$ in the sum of the
principal minors of size $2$, we infer that $\parder{}{x_1} \tilde{H}_2 = 0$.

So the second row of $\jac \tilde{H}$ is $(\parder{}{x_2}\tilde{H}_2)e_2\tp$, and thus $\parder{}{x_2} \tilde{H}_2$ is zero since $\jac \tilde{H}$ is nilpotent.
Hence the second row of $\jac \tilde{H}$ is zero. Since $\tr \jac \tilde{H} = 0$, we infer (c).

\end{compactitem}

\item
Let $N$ be the principal submatrix of size $3$ of $\jac \tilde{H}$.

Suppose first that $\jac \tilde{H}$ may only be nonzero in the first $3$ columns.
Then $N$ is nilpotent. On account of
\cite[Theorem 3.2]{bondt17}, $N$ is similar over $K$ to a triangular matrix.
But for a triangular nilpotent Jacobian matrix of size $3$ over a field of characteristic
$2$, the rank cannot be $2$. So $\rk N \le 1$.

Hence the rows of $N$ are dependent over $K$ in pairs.
If the second and the third row of $N$ are zero, then (c) follows.
If the second or the third row of $N$ is not zero, then we may assume that
the first $2$ rows of $N$ are zero, and (c) follows as well.

Suppose next that $\jac \tilde{H}$  may only be nonzero in the first row
and the first $3$ columns, but not just the first $3$ columns. We distinguish
three cases.
\begin{compactitem}

\item $\parder{}{x_2} \tilde{H}_1, \parder{}{x_3} \tilde{H}_1 \in K[x_1,x_2,x_3]$.

Using techniques of the proof of (i), we can reduce to the case where
$\jac \tilde{H}$ may only be nonzero in the first $3$ columns.

\item $\parder{}{x_2} \tilde{H}_1, \parder{}{x_3} \tilde{H}_1 \notin K[x_1,x_2,x_3]$.

Using techniques of the proof of (i), we can deduce that
$\parder{}{x_1} \tilde{H}_2 = \parder{}{x_1} \tilde{H}_3 = 0$,
and that $\jac_{x_2,x_3} (\tilde{H}_2,\tilde{H}_3)$ is nilpotent.
On account of \cite[Theorem 3.2]{bondt17},
$\jac_{x_2,x_3} (\tilde{H}_2,\tilde{H}_3)$ is similar over $K$
to a triangular matrix. But a triangular nilpotent Jacobian matrix
of size $2$ over a field of characteristic $2$ must be zero.
So $\jac_{x_2,x_3} (\tilde{H}_2,\tilde{H}_3) = 0$.
Consequently, the last two rows of $N$ are zero, and (c) follows.

\item None of the above.

Assume without loss of generality that $\parder{}{x_2} \tilde{H}_1 \in
K[x_1,x_2,x_3]$ and $\parder{}{x_3} \tilde{H}_1 \notin K[x_1,x_2,x_3]$.
Since $\hess (\tilde{H}_1|_{x_1=0})$ is the product of a permutation matrix and a diagonal
matrix, it follows that $\parder{}{x_3} \tilde{H}_1$ is a linear combination of
$x_1$ and $x_i$, where $i \ge 4$, such that $x_i$ does not occur in any other
entry of $\jac \tilde{H}$.

Looking at the coefficient of $x_i^1$ in the sum of the
principal minors of size $2$, we infer that $\parder{}{x_1} \tilde{H}_3 = 0$. If
$\parder{}{x_1} \tilde{H}_2 = 0$ as well, then we can advance as above, so assume that
$\parder{}{x_1} \tilde{H}_2 \neq 0$.
Looking at the coefficient of $x_i^1$ in the
sum of the principal minors of size $3$, we infer that
$\parder{}{x_2} \tilde{H}_3 = 0$. Then the third row of $\jac \tilde{H}$ is
$(\parder{}{x_2} \tilde{H}_3)e_3\tp$. So
$\parder{}{x_3} \tilde{H}_3=0$ since $\jac\tilde{H}$ is nilpotent. Hence the third row of
$\jac \tilde{H}$ is zero.

From $\tr \jac \tilde{H} = 0$, we deduce that
$\parder{}{x_1} \tilde{H}_1 = -\parder{}{x_2} \tilde{H}_2$. We show that
\begin{equation} \label{DN0}
\parder{}{x_1} \tilde{H}_1 = \parder{}{x_2} \tilde{H}_2 = 0.
\end{equation}
For that purpose, suppose that $\parder{}{x_1} \tilde{H}_1 \neq 0$.
Since  $\parder{}{x_1} x_1^2 = 0$, the coefficient of $x_1$ in
$\parder{}{x_1} \tilde{H}_1$ is zero. Similarly, the coefficient of $x_2$ in
$\parder{}{x_2} \tilde{H}_2$ is zero. As $\tilde{H}_2 \in K[x_1,x_2,x_3]$, we
infer that
$
\parder{}{x_1} \tilde{H}_1 = -\parder{}{x_2} \tilde{H}_2 \in K x_3 \setminus \{0\}.
$

Looking at the
coefficient of $x_3^2$ in the sum of the
principal minors of size $2$, we deduce that the coefficient of $x_3^2$ in
$(\parder{}{x_i} \tilde{H}_1) \cdot (\parder{}{x_1} \tilde{H}_i)$ is nonzero.
Consequently, the coefficient of $x_3^3$ in
$(\parder{}{x_i} \tilde{H}_1) \cdot (\parder{}{x_2} \tilde{H}_2) \cdot
(\parder{}{x_1} \tilde{H}_i) \in K x_3^3 \setminus \{0\}$ is nonzero.
This contributes to the coefficient of $x_3^3$ in the sum of the principal minors of size $3$,  a contradiction because this contribution
cannot be canceled.

So \eqref{DN0} is satisfied. We show that in addition,
\begin{equation} \label{N120}
\parder{}{x_2} \tilde{H}_1 = 0.
\end{equation}
The coefficient of $x_1$ of $\parder{}{x_2} \tilde{H}_1$ is zero, because of \eqref{DN0}.
The coefficient of $x_2$ of $\parder{}{x_2} \tilde{H}_1$ is zero since
$\parder{}{x_2} x_2^2 = 0$.
The coefficient of $x_3$ of $\parder{}{x_2} \tilde{H}_1$ is zero, because
the coefficient of $x_2$ of $\parder{}{x_3} \tilde{H}_1 \in K x_1 + K x_i$ is zero.
So \eqref{N120} is satisfied as well.

Recall that the third row of $N$ is zero.
From \eqref{DN0} and \eqref{N120}, it follows that the diagonal and the second column
of $N$ are zero as well. Hence every principal minor of $N$ is zero, which gives (c).
\qedhere

\end{compactitem}
\end{enumerate}
\end{proof}

\begin{lemma} \label{lem2}
Let $\tilde{H}$ be as in Lemma \ref{lem1}. Suppose that $\jac \tilde{H}$
has a principal submatrix $M$ of which the determinant is nonzero. Then
\begin{enumerate}[(1)]

\item $\tilde{H}$ is as in (ii) of Lemma \ref{lem1};

\item rows $2$ and $3$ of $\jac \tilde{H}$ are zero;

\item $M$ has size $2$ and $x_2 x_3 \mid \det M$;

\item Besides $M$, there exists exactly one principal minor matrix $M'$
of size $2$ of $\jac H$, such that $\det M' = - \det M$.

\end{enumerate}
\end{lemma}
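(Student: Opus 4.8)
The plan is to combine three facts about $\jac\tilde H$: the sparsity from Lemma~\ref{lem1}(a) (nonzero entries only in row~$1$ and the first $d$ columns, where $d=2$ in case~(i) and $d=3$ in case~(ii)); the equivalence of nilpotency of $\jac\tilde H$ with the vanishing, for every $t$, of the sum of all $t\times t$ principal minors of $\jac\tilde H$; and conditions (b), (c) of Lemma~\ref{lem1}. First I would locate the index set $I$ of $M=(\jac\tilde H)_{I,I}$. If $1\notin I$, then every row of $M$ is supported on columns $\{1,\dots,d\}$, so $M$ has a zero column unless $I\subseteq\{2,\dots,d\}$, and in the latter case $M$ is a principal submatrix of the leading $d\times d$ block and $\det M=0$ by (c); hence $1\in I$. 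Writing $I=\{1\}\cup J\cup K'$ with $J=I\cap\{2,\dots,d\}$ and $K'=I\cap\{d+1,\dots,n\}$, the columns of $M$ indexed by $K'$ are supported only in the first row, so $|K'|\le1$; and $K'\ne\varnothing$, since otherwise $M$ is again a principal submatrix of the leading block. Thus $I=\{1\}\cup J\cup\{k\}$ for a single $k>d$, and expanding $\det M$ along column~$k$ gives
$$\det M=\pm\,\frac{\partial\tilde H_1}{\partial x_k}\cdot\det\bigl((\jac\tilde H)_{J\cup\{k\},\,\{1\}\cup J}\bigr),$$
so both factors are nonzero. Since $\partial\tilde H_1/\partial x_1$ is a $1\times1$ principal minor of the leading block, it vanishes by (c); hence $\tilde H_1$ involves no $x_1$ (up to an $x_1^2$ term when $\chr K=2$, which is invisible to $\jac\tilde H$), so by (b) the form $\partial\tilde H_1/\partial x_j$ is, for every $j\ge2$, a scalar multiple $\gamma_j x_{\sigma(j)}$ of a single variable, $\sigma$ being the involution attached to the Hessian $PD$ of $\tilde H_1$. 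Also $\tilde H_i\in K[x_1,\dots,x_d]$ for $i\ge2$, up to squares of further variables when $\chr K=2$.

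To prove (1) it suffices to exclude $\chr K\ne2$, since in characteristic $2$ the sparsity above is exactly form (ii) of Lemma~\ref{lem1}. So assume $\chr K\ne2$; then $\tilde H$ is as in case~(i), $d=2$. Here $\tilde H_1=q\in K[x_2,\dots,x_n]$ with $\hess q=PD$, while (c) forces $\tilde H_2=a_2x_1^2$ and $\tilde H_i=a_ix_1^2+b_ix_1x_2+c_ix_2^2$ for $i\ge3$. Substituting these, together with the monomial shape of the forms $\partial\tilde H_1/\partial x_j$, into the vanishing sums of the $2\times2$ and $3\times3$ principal minors of $\jac\tilde H$ and comparing coefficients monomial by monomial, one sees that every contribution vanishes; hence \emph{every} principal minor of $\jac\tilde H$ is zero, contradicting the hypothesis on $M$. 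This establishes (1), so from now on $\chr K=2$, $d=3$.

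Now the $\tilde H_i$ with $i\ge2$ are quadratic forms in $x_1,x_2,x_3$, so Euler's identity puts $(x_1,x_2,x_3)\tp$ in the kernel of every Jacobian $\jac_{x_1,x_2,x_3}(\tilde H_{i_1},\dots,\tilde H_{i_t})$, and so all its maximal minors vanish; comparing with the factorisation of $\det M$ forces $|J|\le1$. Next, each size-$3$ principal minor $\det M_{\{1,2,l\}}$, resp.\ $\det M_{\{1,3,l\}}$, equals $\pm(\partial\tilde H_1/\partial x_l)$ times the $\{1,2\}$-minor, resp.\ $\{1,3\}$-minor, of $\jac_{x_1,x_2,x_3}(\tilde H_2,\tilde H_l)$, resp.\ $\jac_{x_1,x_2,x_3}(\tilde H_3,\tilde H_l)$, which by Euler is divisible by $x_3$, resp.\ $x_2$; feeding this and the monomial shape of $\partial\tilde H_1/\partial x_l$ into the vanishing sum of the size-$3$ minors forces $\jac_{x_1,x_2,x_3}(\tilde H_2,\tilde H_3)=0$, i.e.\ conclusion~(2), and in particular $J=\varnothing$, so the given $M$ has size $2$, $I=\{1,k\}$, and $\det M=-\gamma_k\,x_{\sigma(k)}\,(a_kx_2+b_kx_3)$. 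Finally, with rows $2$ and $3$ of $\jac\tilde H$ zero the only possibly nonzero principal minors are the $\det M_{\{1,l\}}=-\gamma_l\,x_{\sigma(l)}\,(a_lx_2+b_lx_3)$ with $l\ge4$; in the vanishing sum of these, comparing coefficients shows that a summand with $\sigma(l)\ge4$ has $a_l=b_l=0$ and so vanishes, so only $\det M_{\{1,\sigma(2)\}}$ and $\det M_{\{1,\sigma(3)\}}$ can be nonzero, forcing $k_2:=\sigma(2)$ and $k_3:=\sigma(3)$ to be distinct and both $>3$. Writing out the remaining relation as $\gamma_{k_2}a_{k_2}x_2^2+(\gamma_{k_2}b_{k_2}+\gamma_{k_3}a_{k_3})x_2x_3+\gamma_{k_3}b_{k_3}x_3^2=0$ yields $a_{k_2}=b_{k_3}=0$, whence $x_2x_3\mid\det M$ (conclusion~(3)) and $M':=M_{\{1,k_3\}}$ (if $M=M_{\{1,k_2\}}$) is the unique remaining principal minor matrix with $\det M'=-\det M$ (conclusion~(4)). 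The main obstacle is the characteristic $\ne2$ step of (1): with Euler's identity unavailable there is no shortcut for the large minors, and one has to genuinely chase every monomial through the degree-$2$ and degree-$3$ principal-minor identities to see that all of them collapse.
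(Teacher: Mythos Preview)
Your overall strategy---systematically expanding the vanishing sums of $t\times t$ principal minors and comparing monomials, using the ``monomial shape'' $\partial\tilde H_1/\partial x_j=\gamma_j x_{\sigma(j)}$ from condition~(b)---is different from the paper's proof, which instead tracks the variable $x_j$ appearing in the upper-right corner of $M$ and argues that it cannot be cancelled by any other minor. Your approach is more computational but in principle workable; parts~(3) and~(4) come out cleanly. However, there are two genuine gaps.

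\textbf{The reduction in (1) is incorrect.} You write that ``to prove~(1) it suffices to exclude $\chr K\ne2$, since in characteristic~$2$ the sparsity above is exactly form~(ii) of Lemma~\ref{lem1}.'' This is false: case~(i) of Lemma~\ref{lem1} carries no characteristic hypothesis, so $\tilde H$ could be in case~(i) with $\chr K=2$. Being in case~(i) (first row and first $2$ columns) does \emph{not} automatically place $\tilde H$ in case~(ii), because the conclusion~(c) differs: in case~(i) you only know the leading $2\times2$ principal minors vanish, not the leading $3\times3$ ones. Fortunately your monomial argument for case~(i) does not actually use $\chr K\ne2$ in any essential way (in characteristic~$2$ the entries $\partial\tilde H_i/\partial x_1$ lose their $x_1$ terms, which only makes the computation easier), so the fix is simply to run your case~(i) argument without the characteristic restriction.

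\textbf{The derivation of (2) is incomplete.} You claim that the vanishing of the size-$3$ principal-minor sum alone forces $\jac_{x_1,x_2,x_3}(\tilde H_2,\tilde H_3)=0$. It does not. Writing row~$2$ as $(ex_3,0,ex_1,0,\ldots)$ and row~$3$ as $(d'x_2,d'x_1,0,0,\ldots)$ (which follows from $N_{22}=N_{33}=0$), condition~(c) gives only $ed'=0$; say $e=0$. Then the size-$3$ sum yields $d'\gamma_l c_{13}^{(l)}=d'\gamma_l c_{23}^{(l)}=0$ for all $l>3$, which kills all size-$3$ minors but says nothing about $d'$ itself. To force $d'=0$ you must feed this back into the size-$2$ sum: if $d'\ne0$ then $\gamma_l c_{13}^{(l)}=0$, and combining with the size-$2$ relations (including the $x_2x_3$ coefficient $\gamma_{k_2}c_{13}^{(k_2)}+\gamma_{k_3}c_{12}^{(k_3)}=0$) forces $\gamma_l c_{12}^{(l)}=0$ as well, so \emph{every} principal minor vanishes---contradicting the existence of $M$. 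Thus~(2) follows by contradiction using both minor sums, not from the size-$3$ sum alone. The paper avoids this entirely: it observes directly that the upper-right corner of $M$ is $\gamma_k x_{\sigma(k)}$ with $\sigma(k)\in\{2,3\}$ (else $x_{\sigma(k)}$ appears nowhere else in $\jac\tilde H$ and the minor cannot cancel), and that a second such $M'$ with $\sigma(k')=5-\sigma(k)$ must exist, whence $N_{12},N_{13}\ne0$ and so $N_{21}=N_{31}=0$ by~(c).
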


\begin{proof}
Take for $N$ the leading principal submatrix of size $2$ (resp.\@ $3$)
of $\jac \tilde{H}$.
Then $M$ is not a principal minor matrix of $N$. So if $M$ does not
contain the upper left corner of $\jac \tilde{H}$, then the last column
of $M$ is zero. Hence $M$ does contain the upper left corner of
$\jac \tilde{H}$.

If $M$ has two columns outside the column range of $N$,
then both columns are dependent on $e_1$. So $M$ has exactly one column
outside the column range of $N$, say column $i$.
\begin{enumerate}[(i)]

\item Suppose first that $\tilde{H}$ is as in (i) of Lemma \ref{lem1}.
Then either $M$ has size $2$ with row and column indices $1$ and $i$,
or $M$ has size $3$ with row and column indices $1$, $2$ and $i$.

The coefficient of $x_1$ in the upper right corner of $M$ is zero, because
$N_{11} = 0$. Hence the upper right corner
of $M$ is of the form $c x_j$ for some nonzero $c \in K$ and a $j \ge 2$.
If $j \ge 3$, then $x_j$ does not appear in any other position of $\jac \tilde{H}$,
and thus all other minors of the same size of $M$ contains no $x_j$, contradicting
the nilpotency of $\jac\tilde{H}$.

So $j = 2$. Now $\det M$ is the only nonzero principal minor of its size which
belongs to $K[x_1,x_2]$, contradicting the nilpotency of $\jac\tilde{H}$ as well.

\item Suppose next that $\tilde{H}$ is as in (ii) of Lemma \ref{lem1}.
If the second row of $\jac\tilde{H}$ is nonzero, then the coefficient of
$x_1 x_3$ in $\tilde{H}_2$ is nonzero, because $N_{22} = 0$.
If the third row of $\jac\tilde{H}$
is nonzero, then the coefficient of $x_1 x_2$ in $\tilde{H}_3$ is nonzero,
because $N_{33} = 0$.
Since every principal minor of $N$ is zero, we infer that
$N_{23} N_{32} = 0$, so either the second or the third row of $\jac\tilde{H}$ is zero.

Assume without loss of generality that the second row of $\jac \tilde{H}$
is zero. Then either $M$ has size $2$ with row and column indices $1$ and $i$,
or $M$ has size $3$ with row and column indices $1$, $3$ and $i$.
The upper right corner of $M$ is of the form $c x_j$ for some nonzero
$c \in K$, and with the techniques in (i) above, we see that $2 \le j \le 3$.

Furthermore, we infer with the techniques in (i) above that $\jac \tilde{H}$
has another principal submatrix $M'$ of the same size as $M'$, of which the
determinant is nonzero as well. The upper right corner of $M'$ can only be
of the form $c' x_{5-j}$ for some nonzero $c' \in K$.

It follows that $N_{12} \ne 0$ and $N_{13} \ne 0$. Consequently, $N_{21} = N_{31} = 0$.
This is only possible if both the second and the third
row of $\jac \tilde{H}$ are zero. So $M$ has size $2$, and claims (3) and (4)
follow. \qedhere

\end{enumerate}
\end{proof}

\begin{lemma} \label{lem4} Let $\tilde{H}=\big(x_1 x_3 + c x_2 x_4, x_2 x_3 - x_1 x_4, \frac12 x_3^2 + \frac{c}2 x_4^2, \frac12 x_1^2 + \frac{c}2 x_2^2\big)$ as in Lemma \ref{rk3calc}, where $c \neq 0$. Let $M \in \Mat_{4}(K)$ be such that $\deg \det \big(\jac{\tilde{H}} + M\big) \le 2$.
Then there exists a translation $G$, such that
$$
\tilde{H}\big(G(x)\big) - \big(\tilde{H} + Mx\big) \in K^4.
$$
In particular, $\det \big(\jac{\tilde{H}} + M\big) = \det \jac \big(\tilde{H} + Mx\big) = 0$.
\end{lemma}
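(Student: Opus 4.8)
The plan is to reduce everything to showing that the hypothesis forces $M=(\jac\tilde{H})|_{x=b}$ for some $b\in K^4$, after which the translation $G(x):=x+b$ works. This reduction is cheap: since $\tilde{H}$ is quadratic homogeneous, Taylor expansion gives $\tilde{H}(x+b)=\tilde{H}(x)+(\jac\tilde{H})|_{x=b}\cdot x+\tilde{H}(b)$, and, the entries of $\jac\tilde{H}$ being linear forms, $\jac\bigl(\tilde{H}(x+b)\bigr)=\jac\tilde{H}+(\jac\tilde{H})|_{x=b}$. Hence, if $M=(\jac\tilde{H})|_{x=b}$, then $\tilde{H}(G(x))-(\tilde{H}+Mx)=\tilde{H}(b)\in K^4$, and $\det(\jac\tilde{H}+M)=\det\jac\bigl(\tilde{H}(x+b)\bigr)=(\det\jac\tilde{H})(x+b)=0$, the last equality because $\det\jac\tilde{H}=0$ (by Corollary \ref{rktrdeg}, or simply since $\rk\jac\tilde{H}=3<4$, or from the explicit relation $\tilde{H}_1^2+c\tilde{H}_2^2-4\tilde{H}_3\tilde{H}_4=0$). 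So it remains to prove: $\deg\det(\jac\tilde{H}+M)\le2$ implies $M=(\jac\tilde{H})|_{x=b}$ for some $b$.

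To make the degree condition explicit, I would expand $\det(\jac\tilde{H}+M)$ by multilinearity in the columns, writing $\det(\jac\tilde{H}+M)=\sigma_0+\sigma_1+\sigma_2+\sigma_3+\sigma_4$ with $\sigma_k$ homogeneous of degree $4-k$ in $x$ (the part using exactly $k$ columns of the constant matrix $M$). Then $\sigma_0=\det\jac\tilde{H}=0$ and, by Jacobi's formula, $\sigma_1=\tr\bigl(\operatorname{adj}(\jac\tilde{H})\cdot M\bigr)$; since $\sigma_1$ is the only homogeneous component of degree $3$, the condition $\deg\det(\jac\tilde{H}+M)\le2$ is \emph{equivalent} to $\sigma_1=0$. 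Next I would write down $\jac\tilde{H}$ explicitly and use that, since $\rk\jac\tilde{H}=3$, its adjugate has rank $1$, with column space $\ker\jac\tilde{H}=\langle v\rangle$ for $v=(cx_2,-x_1,cx_4,-x_3)\tp$ and row space the left kernel $\langle w\rangle$ for $w=(\tilde{H}_1,\,c\tilde{H}_2,\,-2\tilde{H}_4,\,-2\tilde{H}_3)$ (this $w$ is obtained by differentiating $\tilde{H}_1^2+c\tilde{H}_2^2-4\tilde{H}_3\tilde{H}_4=0$ and applying the chain rule). As the entries of $\operatorname{adj}(\jac\tilde{H})$ are homogeneous of degree $3$, just like those of $v\,w$, we get $\operatorname{adj}(\jac\tilde{H})=\gamma\,v\,w$ for some $\gamma\in K^*$; hence $\sigma_1=\gamma\,w\,M\,v$, and the whole hypothesis reduces to the single polynomial identity $w\,M\,v=0$.

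Finally I would solve $wMv=0$. Writing $wMv=\sum_{i=1}^{4}w_i\,(Mv)_i$ — a cubic form in $x$ whose coefficients are linear in the $16$ entries of $M$ — I would extract the coefficients of suitably chosen cubic monomials (for instance, the coefficient of $x_1^3$ is a nonzero multiple of $M_{32}$, forcing $M_{32}=0$, and the pure cubes $x_i^3$ together with a few mixed monomials already suffice), obtaining $12$ independent linear relations among the $M_{ij}$. These confine $M$ to a space of dimension at most $4$, and since the $4$-dimensional space $\{(\jac\tilde{H})|_{x=b}:b\in K^4\}$ is contained in it by the first paragraph, the two coincide; so $M=(\jac\tilde{H})|_{x=b}$ for a unique $b$, which finishes the proof. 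I expect the main obstacle to be exactly this last step — verifying that $wMv=0$ cuts the $16$-dimensional space of matrices down to precisely the $4$-dimensional space of evaluated Jacobians — the reduction via Jacobi's formula and the rank-one adjugate being what makes this computation tractable in place of a full $4\times4$ determinant expansion.
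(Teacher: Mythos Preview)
Your proposal is correct, and the key computation does go through: the twelve monomials $x_i^3$ ($i=1,\dots,4$) and $x_i^2x_j$ with $i\in\{1,2\},\,j\in\{3,4\}$ or $i\in\{3,4\},\,j\in\{1,2\}$ yield twelve independent linear relations among the $m_{ij}$, leaving exactly the four-dimensional space $\{(\jac\tilde{H})|_{x=b}:b\in K^4\}$.

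The paper's proof is close in spirit but organized differently. Rather than showing directly that $M$ lies in the four-dimensional image of $b\mapsto(\jac\tilde{H})|_{x=b}$, the paper first \emph{spends} the four translation parameters by completing the square: it chooses $G$ so that the linear part of $F:=\tilde{H}\circ G^{-1}+M\,G^{-1}$ has a constrained shape (twelve unknown scalars $a_i,b_i,c_i,d_i$), and then reads off twelve cubic coefficients of $\det\jac F$ to force all twelve scalars to vanish. So both proofs end with a twelve-coefficient check; the difference is where the four degrees of freedom are absorbed. What your approach buys is a clean scalar reformulation: using Jacobi's formula and the rank-one factorization $\operatorname{adj}(\jac\tilde{H})=\gamma\,v\,w$ reduces the degree-$3$ condition to the single identity $wMv=0$, so one extracts cubic coefficients from a product of explicit linear and quadratic forms rather than from a full $4\times4$ determinant expansion. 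The paper's route avoids computing $v$, $w$, and the adjugate but pays for it by working inside the larger determinant.
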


\begin{proof}
Since the quartic part of $\det (\jac{\tilde{H}} + M)$ is zero, we deduce that
$\det (\jac{\tilde{H}}) = 0$.
By way of completing the squares, we can choose a translation $G$ such that
the linear part of $F := \tilde{H}\big(G^{-1}(x)\big) + M\,G^{-1}(x)$ is of the form
$$(a_1 x_1 + b_1 x_2 + c_1 x_3 + d_1 x_4,~a_2 x_1 + b_2 x_2 + c_2 x_3 + d_2 x_4,~a_3 x_1 + b_3 x_2,~c_4 x_3 +d_4 x_4).$$

Notice that $\deg \det \jac F \le 2$. Looking at the coefficients of
$x_1^3$, $x_2^3$, $x_3^3$, and $x_4^3$ of $\det \jac F$, we see that
$b_3 = a_3 = d_4 = c_4 = 0$. Looking at the coefficients of
$x_1^2 x_3$, $x_1 x_3^2$, $x_2^2 x_4$, and $x_2 x_4^2$ of $\det \jac F$,
we see that $b_1 = d_1 = a_1 = c_1 = 0$. Looking at the coefficients of
$x_1^2 x_4$, $x_1 x_4^2$, $x_2^2 x_3$, and $x_2 x_3^2$ of $\det \jac F$,
we see that $b_2 = c_2 = a_2 = d_2 = 0$.

So $F$ has trivial linear part, and $\tilde{H} - F \in K^4$.
Hence $\tilde{H}(G) - F(G) \in K^4$, as claimed. The last claim follows from
$\det (\jac{\tilde{H}}) = 0$.
\end{proof}

\begin{theorem} \label{rk3np}
Let $x+H \in K[x]^n$ be a Keller map with $H$ quadratic homogeneous and $\rk \jac H \le 3$. Then
$x+H$ (up to a square part if $\chr K=2$) is linearly conjugate to one of the following automorphisms:
\begin{enumerate} [(1)]
\item a triangular automorphism;

\item $(x_1+x_2x_5+u_1,~x_2+x_1x_4-x_3x_5+u_2,~x_3+x_2x_4+u_3,~x_4,\ldots,x_n),$ where $u_1,u_2,u_3\in K[x_4,x_5,\ldots,x_n]$;

\item $(x_1+x_2x_6,~x_2+x_1x_5-x_3x_6+ax_4x_5-bx_4x_6+u_2,~x_3+x_2x_5,~x_4+x_5x_6,~x_5,\ldots,x_n)$ with $\chr K=2$, where $u_2\in K[x_4,x_7,x_8,\ldots,x_n]$.
\end{enumerate}
In  particular, $x + H$ is a tame automorphism (up to a square part if $\chr K =2$).
\end{theorem}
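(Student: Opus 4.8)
The starting observation is that, since $x+H$ is a Keller map and $H$ is homogeneous, $\jac H$ is nilpotent, so the task is to classify, up to linear conjugation $x+H\mapsto T^{-1}(x+H)(Tx)$, the quadratic homogeneous $H$ with $\jac H$ nilpotent and $\rk\jac H\le 3$. If $\rk\jac H\le 2$, then by \cite{bondt17} $\jac H$ is linearly conjugate to a triangular matrix, so $x+H$ is linearly triangularizable and we are in case~(1); hence I would assume $\rk\jac H=3$. The delicate point is that Theorem~\ref{rk3} classifies $H$ only up to the two-sided action $H\mapsto SH(Tx)$, whereas Keller-ness is preserved only under conjugation. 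This is bridged by two conjugation invariants of $H$: the dimension $d$ of the $K$-span of $H_1,\dots,H_n$, and the least $k$ such that, after a linear change of variables, $H$ depends only on $x_1,\dots,x_k$. Consequently, if $SH(Tx)$ has only $k$ nonzero components (cases (1)--(4) of Theorem~\ref{rk3}) then $d\le k$ and one can conjugate so that only $H_1,\dots,H_k$ are nonzero; and if $\jac\bigl(SH(Tx)\bigr)$ has only its first $k$ columns nonzero (cases (5)--(6)) then $H$ has at most $k$ essential variables, so one can conjugate so that $H\in K[x_1,\dots,x_k]$, i.e.\ only the first $k$ columns of $\jac H$ are nonzero.

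For cases (1)--(4) of Theorem~\ref{rk3}, after this reduction $\jac H$ is supported in at most $4$ rows. If $k=3$ (case~(1)) the leading $3\times 3$ block $B$ of $\jac H$, which carries all its nonzero entries, is nilpotent, so Lemma~\ref{lem3} applies: when $B$ is triangularizable a reordering of the variables shows $x+H$ is triangular (case~(1)); when $B$ is similar to $\left(\begin{smallmatrix}0&f&0\\ b&0&f\\ 0&-b&0\end{smallmatrix}\right)$ with $f,b$ independent linear forms in $x_4,\dots,x_n$, a linear change of $x_4,\dots,x_n$ carrying $f,b$ to two of the coordinates puts $x+H$ into the shape of case~(2), the remaining quadratic terms in $x_4,\dots,x_n$ being absorbed into $u_1,u_2,u_3$. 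Cases~(2) and~(3) of Theorem~\ref{rk3} have $\jac H$ in the ``L-shape'' --- nonzero only in the first row together with the first two (resp.\ three) columns --- which is precisely the hypothesis of Lemma~\ref{lem1}; Lemmas~\ref{lem1} and~\ref{lem2} then normalise the Hessian of the leading part of $H_1$, kill every small principal minor, and force the lower rows of $\jac H$ to vanish, collapsing the situation to case~(1) (in characteristic $2$ only up to a square part, since there $\jac H$ determines $H$ only modulo sums of squares of linear forms). Case~(4) of Theorem~\ref{rk3} cannot occur for a Keller map: after conjugation $x+H$ becomes $x+M_0\tilde H$, with $\tilde H$ the map of Lemma~\ref{rk3calc} and $M_0$ a constant matrix, and block-triangularity of the Jacobian turns Keller-ness into the statement that $\det(I_4+AJ_4)$ is a nonzero constant, where $J_4=\jac\tilde H$ and $A$ is the leading $4\times 4$ block of $M_0$; but Lemma~\ref{lem4} says $\det(J_4+M)=0$ for every constant $M$ with $\deg\det(J_4+M)\le 2$, and this (after a short reduction, e.g.\ taking $M=A^{-1}$ once $A$ is seen to be invertible) is incompatible with Keller-ness.

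For cases (5) and (6) of Theorem~\ref{rk3}, $\jac H$ is supported in its first $k$ columns, and nilpotency of $\jac H$ reduces to nilpotency of its leading $k\times k$ block $P$. When $k=3$ ($\chr K\ne 2$), $P$ involves only $x_1,x_2,x_3$, so the exceptional form of Lemma~\ref{lem3} is impossible for dimension reasons and $P$ is triangularizable; a reordering of the variables shows $x+H$ is triangular, case~(1). When $k=4$ and $\chr K=2$, I would first single out the column of $\jac H$ carrying $H_1$ to reach the L-shape of Lemma~\ref{lem1}(ii); Lemmas~\ref{lem1} and~\ref{lem2} then reduce $\jac H$ to a shape from which the remaining entries are read off, producing either case~(1) or the explicit map of case~(3), and here too the classification holds only up to a square part.

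Once $x+H$ has been placed (up to a square part when $\chr K=2$) into one of (1), (2), (3), tameness is immediate: a triangular automorphism is tame by definition, and the maps of (2) and (3) are visibly finite compositions of elementary automorphisms, one per nonzero component together with one extra elementary map for the square part in characteristic $2$. I expect the main obstacle to be exactly the passage from Theorem~\ref{rk3}'s two-sided normal forms to conjugation normal forms under the nilpotency constraint --- concretely the ``L-shape'' reductions feeding Lemmas~\ref{lem1}--\ref{lem2} in the four-row and characteristic-$2$ cases, the elimination of case~(4) via Lemma~\ref{lem4}, and the bookkeeping verifying that the exceptional subcase of Lemma~\ref{lem3} and the characteristic-$2$ subcases reassemble into exactly the families written in (2) and (3).
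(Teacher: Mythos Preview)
Your overall architecture is right—reduce to $\rk\jac H=3$, invoke Theorem~\ref{rk3}, and treat each case via Lemmas~\ref{lem3}, \ref{lem1}, \ref{lem2}, \ref{lem4}—but the routing of cases is wrong in a way that leaves a genuine gap.

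You assert that cases~(2) and~(3) of Theorem~\ref{rk3} both collapse to the triangular case~(1) via Lemmas~\ref{lem1}--\ref{lem2}, and that the explicit family~(3) of the present theorem emerges instead from case~(6). This is backwards. In case~(6) one has, after conjugation, $\tilde H\in K[x_1,x_2,x_3,x_4]^n$; the explicit family~(3) involves $x_5x_6$ and cannot possibly live inside four variables. The paper disposes of case~(6) by a different mechanism you do not mention: since $\jac_{x_1,\dots,x_4}(\tilde H_1,\dots,\tilde H_4)$ annihilates $(x_1,x_2,x_3,x_4)^{\rm t}$ in characteristic~$2$, an argument from \cite[Lemma~2.10]{sun} forces its rank to be at most~$2$, whence triangularizability. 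Conversely, Lemma~\ref{lem2} does \emph{not} ``kill every small principal minor'': its conclusion is that a surviving nonzero principal minor forces the very specific shape (rows~2,~3 zero, size-$2$ minor divisible by $x_2x_3$, a unique companion minor), and it is precisely this residual shape—arising in case~(3) of Theorem~\ref{rk3}, not case~(6)—that the paper then works out into the explicit matrix~\eqref{eq2} and eventually into family~(3). Your outline would therefore never produce family~(3) at all.

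A secondary issue: your bridge from the two-sided normal form $SH(Tx)$ to the conjugation $T^{-1}H(Tx)$ via the invariants $d$ and $k$ is correct as far as it goes, but it does not by itself deliver the L-shape hypothesis of Lemma~\ref{lem1}. The paper obtains the L-shape in cases~(2) and~(3) by first observing that the rows of $\jac_{x_{r+1},\dots,x_n}\tilde H$ are pairwise $K$-dependent (inherited from the two-sided form) and then using a further conjugation to push the nonzero row to position~$1$; this step needs to be made explicit. Your treatment of case~(4) is essentially the paper's, though your phrase ``cannot occur for a Keller map'' is too strong: when the leading $4\times4$ block has $K$-dependent rows one falls back to case~(1) of Theorem~\ref{rk3} and obtains the triangular outcome, rather than a contradiction.
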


\begin{proof} 
Note first that $\jac H$ is nilpotent since $x+H$ is a Keller map and $H$ is homogeneous.
By \cite[Theorem 3.2]{bondt17}, if
$\rk \jac H \le 2$ then $\jac H$ is similar over $K$ to a triangular matrix, whence $x+H$ is linearly triangularizable and thus tame (up to a square part if $\chr K=2$).

So assume that $\rk \jac H = 3$. We follow the cases
of Theorem \ref{rk3}.
\begin{itemize}

\item $H$ is as in (1) of Theorem \ref{rk3}.

Let $\tilde{H} = S H(S^{-1}x)$. Then only the first $3$ rows of
$\jac \tilde{H}$ may be nonzero. If the leading principal submatrix $N$ of size $3$ of $\jac \tilde{H}$ is similar over $K$
to a triangular matrix, then so is $\jac \tilde{H}$ itself.
So assume that $N$ is not similar over $K$ to a triangular matrix.
Then by Lemma \ref{lem3}, $N$ is similar over $K$ to
a matrix of the form
$$
\left( \begin{array}{ccc}
0 & f & 0 \\ b & 0 & -f \\ 0 & b & 0
\end{array} \right),$$
where $f$ and $b$ are independent linear forms in $K[x_4,x_5,\ldots,x_n]$. Replacing $\tilde{H}$ by
$T\tilde{H}(T^{-1}x)$ for some appropriate $T\in \GL_n(K)$, we may assume that $N$ is of the form
$$
\left( \begin{array}{ccc}
0 & x_5 & 0 \\ x_4 & 0 & -x_5 \\ 0 & x_4 & 0
\end{array} \right).
$$
So when $\chr K\neq 2$, $x+\tilde{H}$ is of the form as in (2) $$(x_1+x_2x_5+u_1,~x_2+x_1x_4-x_3x_5+u_2,~x_3+x_2x_4+u_3,~x_4,\ldots,x_n),$$ where $u_1,u_2,u_3\in K[x_4,x_5,\ldots,x_n]$. Denote by $E_{i,a}$ the elementary automorphism $(x_1,\ldots,x_{i-1},x_i+a,x_{i+1},\ldots,x_n)$.
Then $x+\tilde{H}=E_{1,u_1}\circ E_{2,u_2}\circ E_{3,u_3}\circ E_{2,x_1x_4-x_3x_5}\circ E_{3,x_2x_4}\circ E_{1,x_2x_5}$, and thus  $x+\tilde{H}$ is tame.
And when $\chr K= 2$, the square-free part of $x+\tilde{H}$ is of that form which is tame.

\item $H$ is as in (2) of Theorem \ref{rk3}.

Let $\tilde{H} = T^{-1} H(Tx)$. Then the rows of $\jac_{x_3,x_4,\ldots,x_n} \tilde{H}$
are dependent over $K$ in pairs. Suppose first that the first $2$ rows of
$\jac_{x_3,x_4,\ldots,x_n} \tilde{H}$ are zero. Then we may assume that
only the last row of $\jac_{x_3,x_4,\ldots,x_n} \tilde{H}$ may be nonzero.

Then the
leading principal submatrix $N$ of size $2$ of $\jac \tilde{H}$ is
nilpotent since $\jac\tilde{H}$ is nilpotent. On account of \cite[Theorem 3.2]{bondt17}, $N$ is
similar over $K$ to a triangular matrix. And we deduce that $\jac \tilde{H}$ is similar over $K$ to a triangular matrix. So
we may choose $T$ such that $\jac \tilde{H}$ is lower triangular, and
(1) is satisfied.

Suppose next that the first $2$ rows of
$\jac_{x_3,x_4,\ldots,x_n} \tilde{H}$ are not both zero. Then we may choose
$T$ such that only the first row of $\jac_{x_3,x_4,\ldots,x_n} \tilde{H}$ may
be nonzero. From Lemmas \ref{lem1} and \ref{lem2}, we
may choose $T$ such that every principal minor of $\jac \tilde{H}$ is zero. From \cite[Lemma 1.2]{dru}, it follows that $\jac \tilde{H}$ is
permutation similar to a triangular matrix, and thus (1) is satisfied.

\item $H$ is as in (3) of Theorem \ref{rk3}.

Let $\tilde{H} = T^{-1} H(Tx)$. Then the rows of $\jac_{x_4,x_5,\ldots,x_n} \tilde{H}$
are dependent over $K$ in pairs. Suppose first that the first $3$ rows of
$\jac_{x_4,x_5,\ldots,x_n} \tilde{H}$ are zero. Then we may choose $T$ such that
only the last row of $\jac_{x_4,x_5,\ldots,x_n} \tilde{H}$ may be nonzero,
and just as above, (1) is satisfied.

Suppose next that the first $3$ rows of
$\jac_{x_4,x_5,\ldots,x_n} \tilde{H}$ are not all zero. Then we may choose
$T$ such that only the first row of $\jac_{x_4,x_5,\ldots,x_n} \tilde{H}$ may
be nonzero. If we can choose $T$ such that every principal minor of
$\jac \tilde{H}$ is zero, then (1) is satisfied, just as before.

So assume that we cannot choose $T$ such that every principal minor of
$\jac \tilde{H}$ is zero. By Lemma \ref{lem1} and
Lemma \ref{lem2}, we may choose $T$ such that $\tilde{H}$
is as in Lemma \ref{lem2}. More precisely, we may choose $T$ such that
$\jac \tilde{H}$ is of the form
\begin{equation}\label{eq2}\left(
  \begin{array}{c|c}
          \begin{array}{ccc}
        0 & x_4 & -x_5 \\
        0 & 0 & 0 \\
        0 & 0 & 0 \\
        x_3 & ax_3 & x_1+ax_2 \\
        x_2 & x_1+bx_3 & bx_2 \\
      \end{array}
        & \begin{array}{cccc}
            x_2 & -x_3 & * & \cdots \\
            0 & 0 & 0 & \cdots \\
            0 & 0 & 0 & \cdots \\
            0 & 0 & 0 & \cdots \\
            0 & 0 & 0 & \cdots
          \end{array}
         \\ \hline
    M & 0 \\
  \end{array}
\right).
\end{equation}
If $M=0$ then $H$ is as in (1)
of Theorem \ref{rk3}, which is the first case. So assume that $M\neq 0$.
Since $\parder{}{x_1} x_1^2 = 0$, the coefficients of $x_1$
in the first column of $M$ are zero.
Hence we can clean the first column of $M$ by way of row operations in \eqref{eq2}
with rows $4$ and $5$, and furthermore by way of a linear
conjugation, because if an element in the first column of $M$ is nonzero, then
the transposed entry in the first row of \eqref{eq2} is zero, so the corresponding
column operations will not have any effect.

Then each row of $M$ is of the form $(0,cx_3,cx_2)$, and thus by linear conjugation, we may assume that the first row of $M$ is $(0,x_3,x_2)$ and all the other rows of $M$ are zero. Furthermore, if we take $$S=(x_1,x_2,x_3,x_4+ax_6,x_5+bx_6,x_6,x_7,\ldots,x_n),$$ then $\jac (S^{-1}\circ \tilde{H} \circ S)$ is of the form as in \eqref{eq2} with $a=b=0$ and replacing $x_4$ by $x_4+ax_6$ and replacing $x_5$ by $x_5+bx_6$.
Then the square-free part of $S^{-1}\circ\tilde{H}\circ S$ is of the form $$\big(x_2(x_4+ax_6)-x_3(x_5+bx_6)+u_1,~0,~0,~x_1x_3,~x_1x_2,~x_2x_3,~0,\ldots,0
\big),$$ where $u_1\in K[x_6,x_7,\ldots,x_n]$. Let $$P=(x_2,x_5,x_6,x_1,x_3,x_4,x_7,\ldots,x_n).$$ Then $P^{-1}=(x_4,x_1,x_5,x_6,x_2,x_3,x_7,\ldots,x_n),$
and one may verify that the square-free part of $P^{-1} \circ S^{-1}\circ (x+\tilde{H})\circ S\circ P$ is of the form as in (3)
$$x+(x_2x_6,~x_1x_5-x_3x_6+ax_4x_5-bx_4x_6+u_2,~x_2x_5,~x_5x_6,~0,\ldots,0),$$
where $u_2\in K[x_4,x_7,x_8,\ldots,x_n]$,
which is equal to $$E_{4,x_5x_6}\circ E_{2,x_1x_5-x_3x_6+ax_4x_5-bx_4x_6+u_2}\circ E_{1,x_2x_6}\circ E_{3,x_2x_5}$$ and thus tame.

\item $H$ is as in (4) of Theorem \ref{rk3}.

Then only the first $4$ columns of $\tilde{H} = T^{-1} H(Tx)$ may be nonzero.
Hence the leading principal submatrix $N$ of size $4$ of $\jac \tilde{H}$
is nilpotent.

Suppose that the rows of $N$ are linearly independent over $K$.
Then there exists an $U \in \GL_4(K)$, such that $U N$ is as $\jac \tilde{H}$
in Lemma \ref{lem4}. Furthermore,
$$
\det (UN + U) = \det U \det (N + I_4) = \det U \in K^{*}.
$$
So $\det (UN + U) \ne 0$ and $\deg \det (UN + U) \le 2$,
contradicting Lemma \ref{lem4}.

So the rows of $N$ are linearly dependent over $K$.
Then the first case of the proof applies for the map
$(\tilde{H}_1,\tilde{H}_2,\tilde{H}_3,\tilde{H}_4)$.
Since $\tilde{H}_i\in K[x_1,x_2,x_3,x_4]$, $1\leq i\leq 4$, the case where $N$ is not similar
over $K$ to a triangular matrix cannot occur as in the first case of the proof. So
$N$ is similar over $K$ to a triangular matrix, and so are $\jac \tilde{H}$
and $\jac H$, and thus (1) is satisfied.

\item  $H$ is as in (5) of Theorem \ref{rk3}.

Let $\tilde{H} = T^{-1} H(Tx)$. Then we have $(\tilde{H}_1,\tilde{H}_2,\tilde{H}_3)\in K[x_1,x_2,x_3]^3$
and $\jac_{x_1,x_2,x_3} (\tilde{H}_1,\tilde{H}_2,\tilde{H}_3)$ is nilpotent.
Then by Theorem \cite[Theorem 3.2]{bondt17},
$\jac_{x_1,x_2,x_3} (\tilde{H}_1,\tilde{H}_2,\tilde{H}_3)$ is
similar over $K$ to a triangular matrix, and so is  $\jac \tilde{H}$.
Then (1) is satisfied.

\item  $H$ is as in (6) of Theorem \ref{rk3}.

Let $\tilde{H} = T^{-1} H(Tx)$. Then we have
$(\tilde{H}_1,\tilde{H}_2,\tilde{H}_3,\tilde{H}_4)\in K[x_1,x_2,x_3,x_4,\allowbreak
x_5^2,x_6^2,\ldots,x_n^2]^4$  and
\begin{equation} \label{xeigen}
\jac_{x_1,x_2,x_3,x_4} (\tilde{H}_1,\tilde{H}_2,\tilde{H}_3,\tilde{H}_4) \cdot
\left( \begin{smallmatrix} x_1 \\ x_2 \\ x_3 \\ x_4 \end{smallmatrix} \right) = 0.
\end{equation}
Furthermore, $\jac_{x_1,x_2,x_3,x_4} (\tilde{H}_1,\tilde{H}_2,\tilde{H}_3,\tilde{H}_4)$ is nilpotent.
If $\rk \jac_{x_1,x_2,x_3,x_4} \allowbreak (\tilde{H}_1,\tilde{H}_2,\tilde{H}_3,\tilde{H}_4) \le 2$,
then (1) is satisfied just as in the previous case.

So assume that
$\rk \jac_{x_1,x_2,x_3,x_4} (\tilde{H}_1,\tilde{H}_2,\tilde{H}_3,\tilde{H}_4) = 3$, whence
its Jordan Normal Form has only one block, so $\big(\jac_{x_1,x_2,x_3,x_4} \allowbreak
(\tilde{H}_1,\tilde{H}_2,\tilde{H}_3,\tilde{H}_4)\big)^3 \ne 0$.
From the proof of \cite[Lemma 2.10]{sun}, we infer that
$$
\big(\jac_{x_1,x_2,x_3,x_4} (\tilde{H}_1,\tilde{H}_2,\tilde{H}_3,\tilde{H}_4)\big)^3 \cdot
\left( \begin{smallmatrix} x_1 \\ x_2 \\ x_3 \\ x_4 \end{smallmatrix} \right) \ne 0
$$
which contradicts \eqref{xeigen}. \qedhere

\end{itemize}
\end{proof}

\paragraph{Acknowledgments}
The first author has been supported by the Netherlands Organisation of Scientific Research (NWO).
The second author has been partially supported by the NSF of China (11771176) and by the China Scholarship Council.



\begin{thebibliography}{99}

\bibitem{bass1982} H. Bass, E. Connel, D. Wright, The Jacobian Conjecture: Reduction of degree and formal expansion of
the inverse, Bull. Amer. Math. Soc., 7 (1982) 287-330.

\bibitem{bondt09} M. de Bondt, Homogeneous Keller maps, Ph.D. thesis, Univ.
of Nij\-me\-gen, The Netherlands, 2009.

\bibitem{bondt13} M. de Bondt, Mathieu subspaces of codimension less than $n$ of $\Mat_n(K)$, arXiv:1310.7843, [math.AC], 2013.

\bibitem{bondt17} M. de Bondt, Quadratic polynomial maps with Jacobian rank two, arXiv: 1061.00579v4 [math.AC], 2017.

\bibitem{bondt172} M. de Bondt, Computations of keller maps over fields with $\frac{1}{6}$, arXiv:1601.09753, [math.AC], 2017.

\bibitem{dru} L. Dru\.zkowski, The Jacobian conjecture in case of rank or corank less than three, J. Pure Appl. Algebra, 85(3)(1993) 233--244.

\bibitem{essen2000} A. van den Essen, Polynomial automorphisms and the Jacobian Conjecture, Progress in Mathematics, Vol. 190, Birkh\"{a}user,
Basel-Boston-Berlin, 2000.

\bibitem{jung} H. Jung, \"{U}ber ganze birationale Transformationen der
Ebene, J. Reine. Angew. Math., 184 (1942) 161--174.

\bibitem{kulk} W. van der Kulk, On polynomial rings in two variables, Nieuw
Archief voor Wiskunde, 3(1)(1953) 33--41.

\bibitem{meisters1991} G. Meisters, C. Olech, Strong nilpotence holds in dimension up to five only, Linear
Multilinear Algebra, 30 (4) (1991) 231-255.

\bibitem{rusek} K. Rusek, Polynomial automorphisms, preprint 456,
Inst. of Math., Polish Acad. of Sciences, IMPAN, \'{S}niadeckich 8,
P.O. Box 137, 00-950 Warsaw, Poland, May 1989.

\bibitem{shestakov2} I. P. Shestakov, U. U. Umirbaev, The tame and the
wild automorphisms of polynomial rings in three variables, J. Amer.
Math. Soc., 17(1)(2004) 197--227.

\bibitem{sun10} X. Sun, On quadratic homogeneous quasi-translations, J. Pure Appl. Algebra, 214(11)(2010) 1962--1972.

\bibitem{sun} X. Sun, Classification of quadratic homogeneous automorphisms in dimension five, Comm. Algebra, 42(7)(2014) 2821--2840.

\bibitem{wang} S. Wang, A Jacobian criterion for separability, J. Algebra,
65(2)(1980) 453-494.
\end{thebibliography}
\end{document}